\def\eq{\displaystyle\stackrel\triangle=}
\newtheorem{theorem}{Theorem}
\newtheorem{corollary}{Corollary}
\newtheorem{definition}{Definition}
\newtheorem{example}{Example}
\newtheorem{exercise}{Exercise}
\newtheorem{lemma}{Lemma}
\newtheorem{problem}{Problem}
\newtheorem{proposition}{Proposition}
\newtheorem{remark}{Remark}
\chardef\@x10\chardef\@xv60
\def\tcitime{
\def\@time{%
  \@minute\time\@hour\@minute\divide\@hour\@xv
  \ifnum\@hour<\@x 0\fi\the\@hour:%
  \multiply\@hour\@xv\advance\@minute-\@hour
  \ifnum\@minute<\@x 0\fi\the\@minute
  }}%
\def\QCTOpt[#1]#2{%
  \def\QCTOptB{#1}
  \def\QCTOptA{#2}
}
\def\QCTNOpt#1{%
  \def\QCTOptA{#1}
  \let\QCTOptB\empty
}
\def\Qct{%
  \@ifnextchar[{%
    \QCTOpt}{\QCTNOpt}
}
\def\QCBOpt[#1]#2{%
  \def\QCBOptB{#1}
  \def\QCBOptA{#2}
}
\def\QCBNOpt#1{%
  \def\QCBOptA{#1}
  \let\QCBOptB\empty
}
\def\Qcb{%
  \@ifnextchar[{%
    \QCBOpt}{\QCBNOpt}
}
\def\PrepCapArgs{%
  \ifx\QCBOptA\empty
    \ifx\QCTOptA\empty
      {}%
    \else
      \ifx\QCTOptB\empty
        {\QCTOptA}%
      \else
        [\QCTOptB]{\QCTOptA}%
      \fi
    \fi
  \else
    \ifx\QCBOptA\empty
      {}%
    \else
      \ifx\QCBOptB\empty
        {\QCBOptA}%
      \else
        [\QCBOptB]{\QCBOptA}%
      \fi
    \fi
  \fi
}
\def\GRAPHICSPS#1{%
 \ifcase\GRAPHICSTYPE%\GRAPHICSTYPE=0
   \special{ps: #1}%
 \or%\GRAPHICSTYPE=1
   \special{language "PS", include "#1"}%
%%%\or%\GRAPHICSTYPE=2
%%%  #1%
 \fi
}%
\def\graffile#1#2#3#4{%
%%% \ifnum\GRAPHICSTYPE=\tw@
%%%  %Following if using psfig
%%%  \@ifundefined{psfig}{\input psfig.tex}{}%
%%%  \psfig{file=#1, height=#3, width=#2}%
%%% \else
  %Following for all others
  % JCS - added BOXTHEFRAME, see below
    \leavevmode
    \raise -#4 \BOXTHEFRAME{%
        \hbox to #2{\raise #3\hbox to #2{\null #1\hfil}}}%
}%
\def\draftbox#1#2#3#4{%
 \leavevmode\raise -#4 \hbox{%
  \frame{\rlap{\protect\tiny #1}\hbox to #2%
   {\vrule height#3 width\z@ depth\z@\hfil}%
  }%
 }%
}%
\newif\ifwasdraft
\def\GRAPHIC#1#2#3#4#5{%
 \ifnum\draft=\@ne\draftbox{#2}{#3}{#4}{#5}%
  \else\graffile{#1}{#3}{#4}{#5}%
  \fi
 }%
\def\addtoLaTeXparams#1{%
    \edef\LaTeXparams{\LaTeXparams #1}}%
\newif\ifBoxFrame \BoxFramefalse
\newif\ifOverFrame \OverFramefalse
\newif\ifUnderFrame \UnderFramefalse
\def\BOXTHEFRAME#1{%
   \hbox{%
      \ifBoxFrame
         \frame{#1}%
      \else
         {#1}%
      \fi
   }%
}
\def\doFRAMEparams#1{\BoxFramefalse\OverFramefalse\UnderFramefalse\readFRAMEparams#1\end}%
\def\readFRAMEparams#1{%
 \ifx#1\end%
  \let\next=\relax
  \else
  \ifx#1i\dispkind=\z@\fi
  \ifx#1d\dispkind=\@ne\fi
  \ifx#1f\dispkind=\tw@\fi
  \ifx#1t\addtoLaTeXparams{t}\fi
  \ifx#1b\addtoLaTeXparams{b}\fi
  \ifx#1p\addtoLaTeXparams{p}\fi
  \ifx#1h\addtoLaTeXparams{h}\fi
  \ifx#1X\BoxFrametrue\fi
  \ifx#1O\OverFrametrue\fi
  \ifx#1U\UnderFrametrue\fi
  \ifx#1w
    \ifnum\draft=1\wasdrafttrue\else\wasdraftfalse\fi
    \draft=\@ne
  \fi
  \let\next=\readFRAMEparams
  \fi
 \next
 }%
\def\IFRAME#1#2#3#4#5#6{%
      \bgroup
      \let\QCTOptA\empty
      \let\QCTOptB\empty
      \let\QCBOptA\empty
      \let\QCBOptB\empty
      #6%
      \parindent=0pt%
      \leftskip=0pt
      \rightskip=0pt
      \setbox0 = \hbox{\QCBOptA}%
      \@tempdima = #1\relax
      \ifOverFrame
          % Do this later
          \typeout{This is not implemented yet}%
          \show\HELP
      \else
         \ifdim\wd0>\@tempdima
            \advance\@tempdima by \@tempdima
            \ifdim\wd0 >\@tempdima
               \textwidth=\@tempdima
               \setbox1 =\vbox{%
                  \noindent\hbox to \@tempdima{\hfill\GRAPHIC{#5}{#4}{#1}{#2}{#3}\hfill}\\%
                  \noindent\hbox to \@tempdima{\parbox[b]{\@tempdima}{\QCBOptA}}%
               }%
               \wd1=\@tempdima
            \else
               \textwidth=\wd0
               \setbox1 =\vbox{%
                 \noindent\hbox to \wd0{\hfill\GRAPHIC{#5}{#4}{#1}{#2}{#3}\hfill}\\%
                 \noindent\hbox{\QCBOptA}%
               }%
               \wd1=\wd0
            \fi
         \else
            %\show\BBB
            \ifdim\wd0>0pt
              \hsize=\@tempdima
              \setbox1 =\vbox{%
                \unskip\GRAPHIC{#5}{#4}{#1}{#2}{0pt}%
                \break
                \unskip\hbox to \@tempdima{\hfill \QCBOptA\hfill}%
              }%
              \wd1=\@tempdima
           \else
              \hsize=\@tempdima
              \setbox1 =\vbox{%
                \unskip\GRAPHIC{#5}{#4}{#1}{#2}{0pt}%
              }%
              \wd1=\@tempdima
           \fi
         \fi
         \@tempdimb=\ht1
         \advance\@tempdimb by \dp1
         \advance\@tempdimb by -#2%
         \advance\@tempdimb by #3%
         \leavevmode
         \raise -\@tempdimb \hbox{\box1}%
      \fi
      \egroup%
}%
\def\DFRAME#1#2#3#4#5{%
 \begin{center}
     \let\QCTOptA\empty
     \let\QCTOptB\empty
     \let\QCBOptA\empty
     \let\QCBOptB\empty
     \ifOverFrame 
        #5\QCTOptA\par
     \fi
     \GRAPHIC{#4}{#3}{#1}{#2}{\z@}
     \ifUnderFrame 
        \nobreak\par #5\QCBOptA
     \fi
 \end{center}%
 }%
\def\FFRAME#1#2#3#4#5#6#7{%
 \begin{figure}[#1]%
  \let\QCTOptA\empty
  \let\QCTOptB\empty
  \let\QCBOptA\empty
  \let\QCBOptB\empty
  \ifOverFrame
    #4
    \ifx\QCTOptA\empty
    \else
      \ifx\QCTOptB\empty
        \caption{\QCTOptA}%
      \else
        \caption[\QCTOptB]{\QCTOptA}%
      \fi
    \fi
    \ifUnderFrame\else
      \label{#5}%
    \fi
  \else
    \UnderFrametrue%
  \fi
  \begin{center}\GRAPHIC{#7}{#6}{#2}{#3}{\z@}\end{center}%
  \ifUnderFrame
    #4
    \ifx\QCBOptA\empty
      \caption{}%
    \else
      \ifx\QCBOptB\empty
        \caption{\QCBOptA}%
      \else
        \caption[\QCBOptB]{\QCBOptA}%
      \fi
    \fi
    \label{#5}%
  \fi
  \end{figure}%
 }%
\def\makeactives{
  \catcode`\"=\active
  \catcode`\;=\active
  \catcode`\:=\active
  \catcode`\'=\active
  \catcode`\~=\active
}
   \gdef\activesoff{%
      \def"{\string"}
      \def;{\string;}
      \def:{\string:}
      \def'{\string'}
      \def~{\string~}
      %\bbl@deactivate{"}%
      %\bbl@deactivate{;}%
      %\bbl@deactivate{:}%
      %\bbl@deactivate{'}%
    }
\def\FRAME#1#2#3#4#5#6#7#8{%
 \bgroup
 \@ifundefined{bbl@deactivate}{}{\activesoff}
 \ifnum\draft=\@ne
   \wasdrafttrue
 \else
   \wasdraftfalse%
 \fi
 \def\LaTeXparams{}%
 \dispkind=\z@
 \def\LaTeXparams{}%
 \doFRAMEparams{#1}%
 \ifnum\dispkind=\z@\IFRAME{#2}{#3}{#4}{#7}{#8}{#5}\else
  \ifnum\dispkind=\@ne\DFRAME{#2}{#3}{#7}{#8}{#5}\else
   \ifnum\dispkind=\tw@
    \edef\@tempa{\noexpand\FFRAME{\LaTeXparams}}%
    \@tempa{#2}{#3}{#5}{#6}{#7}{#8}%
    \fi
   \fi
  \fi
  \ifwasdraft\draft=1\else\draft=0\fi{}%
  \egroup
 }%
\def\TEXUX#1{"texux"}
\long\def\QQQ#1#2{%
     \long\expandafter\def\csname#1\endcsname{#2}}%
\long\def\QQA#1#2{}%
\def\QTR#1#2{{\csname#1\endcsname #2}}%(gp) Is this the best?
\def\EXPAND#1[#2]#3{}%
\def\NOEXPAND#1[#2]#3{}%
\def\LaTeXparent#1{}%
\def\ChildStyles#1{}%
\def\ChildDefaults#1{}%
\def\QTagDef#1#2#3{}%
\def\QQfnmark#1{\footnotemark}
\def\makeatletter\input gnuindex.sty\makeatother\makeindex{\makeatletter\input gnuindex.sty\makeatother\makeindex}%	
\def\initial#1{\bigbreak{\raggedright\large\bf #1}\kern 2\p@\penalty3000}}%
 \def\abstract{%
  \if@twocolumn
   \section*{Abstract (Not appropriate in this style!)}%
   \else \small 
   \begin{center}{\bf Abstract\vspace{-.5em}\vspace{\z@}}\end{center}%
   \quotation 
   \fi
  }%
   \def\registered{\relax\ifmmode{}\r@gistered
                    \else$\m@th\r@gistered$\fi}%
 \def\r@gistered{^{\ooalign
  {\hfil\raise.07ex\hbox{$\scriptstyle\rm\text{R}$}\hfil\crcr
  \mathhexbox20D}}}}{}%
\newdimen\theight
\def\Column{%
 \vadjust{\setbox\z@=\hbox{\scriptsize\quad\quad tcol}%
  \theight=\ht\z@\advance\theight by \dp\z@\advance\theight by \lineskip
  \kern -\theight \vbox to \theight{%
   \rightline{\rlap{\box\z@}}%
   \vss
   }%
  }%
 }%
\def\qed{%
 \ifhmode\unskip\nobreak\fi\ifmmode\ifinner\else\hskip5\p@\fi\fi
 \hbox{\hskip5\p@\vrule width4\p@ height6\p@ depth1.5\p@\hskip\p@}%
 }%
\def\miss{\hbox{\vrule height2\p@ width 2\p@ depth\z@}}%
\def\tcol#1{{\baselineskip=6\p@ \vcenter{#1}} \Column}  %
\def\newfmtname{LaTeX2e}
\def\chkcompat{%
   \if@compatibility
   \else
     \usepackage{latexsym}
   \fi
}
  \DeclareOldFontCommand{\rm}{\normalfont\rmfamily}{\mathrm}
  \DeclareOldFontCommand{\sf}{\normalfont\sffamily}{\mathsf}
  \DeclareOldFontCommand{\tt}{\normalfont\ttfamily}{\mathtt}
  \DeclareOldFontCommand{\bf}{\normalfont\bfseries}{\mathbf}
  \DeclareOldFontCommand{\it}{\normalfont\itshape}{\mathit}
  \DeclareOldFontCommand{\sl}{\normalfont\slshape}{\@nomath\sl}
  \DeclareOldFontCommand{\sc}{\normalfont\scshape}{\@nomath\sc}
\def\alpha{\Greekmath 010B }%
\def\beta{\Greekmath 010C }%
\def\gamma{\Greekmath 010D }%
\def\delta{\Greekmath 010E }%
\def\epsilon{\Greekmath 010F }%
\def\zeta{\Greekmath 0110 }%
\def\eta{\Greekmath 0111 }%
\def\theta{\Greekmath 0112 }%
\def\iota{\Greekmath 0113 }%
\def\kappa{\Greekmath 0114 }%
\def\lambda{\Greekmath 0115 }%
\def\mu{\Greekmath 0116 }%
\def\nu{\Greekmath 0117 }%
\def\xi{\Greekmath 0118 }%
\def\pi{\Greekmath 0119 }%
\def\rho{\Greekmath 011A }%
\def\sigma{\Greekmath 011B }%
\def\tau{\Greekmath 011C }%
\def\upsilon{\Greekmath 011D }%
\def\phi{\Greekmath 011E }%
\def\chi{\Greekmath 011F }%
\def\psi{\Greekmath 0120 }%
\def\omega{\Greekmath 0121 }%
\def\varepsilon{\Greekmath 0122 }%
\def\vartheta{\Greekmath 0123 }%
\def\varpi{\Greekmath 0124 }%
\def\varrho{\Greekmath 0125 }%
\def\varsigma{\Greekmath 0126 }%
\def\varphi{\Greekmath 0127 }%
\def\nabla{\Greekmath 0272 }
\def\FindBoldGroup{%
   {\setbox0=\hbox{$\mathbf{x\global\edef\theboldgroup{\the\mathgroup}}$}}%
}
\def\Greekmath#1#2#3#4{%
    \if@compatibility
        \ifnum\mathgroup=\symbold
           \mathchoice{\mbox{\boldmath$\displaystyle\mathchar"#1#2#3#4$}}%
                      {\mbox{\boldmath$\textstyle\mathchar"#1#2#3#4$}}%
                      {\mbox{\boldmath$\scriptstyle\mathchar"#1#2#3#4$}}%
                      {\mbox{\boldmath$\scriptscriptstyle\mathchar"#1#2#3#4$}}%
        \else
           \mathchar"#1#2#3#4% 
        \fi 
    \else 
        \FindBoldGroup
        \ifnum\mathgroup=\theboldgroup % For 2e
           \mathchoice{\mbox{\boldmath$\displaystyle\mathchar"#1#2#3#4$}}%
                      {\mbox{\boldmath$\textstyle\mathchar"#1#2#3#4$}}%
                      {\mbox{\boldmath$\scriptstyle\mathchar"#1#2#3#4$}}%
                      {\mbox{\boldmath$\scriptscriptstyle\mathchar"#1#2#3#4$}}%
        \else
           \mathchar"#1#2#3#4% 
        \fi     	    
	  \fi}
\newif\ifGreekBold  \GreekBoldfalse
\let\SAVEPBF=\pbf
\def\pbf{\GreekBoldtrue\SAVEPBF}%
  \newcounter{equationnumber}  
  \def\mathletters{%
     \addtocounter{equation}{1}
     \edef\@currentlabel{\theequation}%
     \setcounter{equationnumber}{\c@equation}
     \setcounter{equation}{0}%
     \edef\theequation{\@currentlabel\noexpand\alph{equation}}%
  }
    \def\BibTeX{{\rm B\kern-.05em{\sc i\kern-.025em b}\kern-.08em
                 T\kern-.1667em\lower.7ex\hbox{E}\kern-.125emX}}}{}%
\def\AmS{{\protect\usefont{OMS}{cmsy}{m}{n}%
                A\kern-.1667em\lower.5ex\hbox{M}\kern-.125emS}}}{}%
\let\DOTSI\relax
\def\RIfM@{\relax\ifmmode}%
\def\FN@{\futurelet\next}%
\def\iint{\DOTSI\intno@\tw@\FN@\ints@}%
\def\iiint{\DOTSI\intno@\thr@@\FN@\ints@}%
\def\iiiint{\DOTSI\intno@4 \FN@\ints@}%
\def\idotsint{\DOTSI\intno@\z@\FN@\ints@}%
\def\ints@{\findlimits@\ints@@}%
\newif\iflimtoken@
\newif\iflimits@
\def\findlimits@{\limtoken@true\ifx\next\limits\limits@true
 \else\ifx\next\nolimits\limits@false\else
 \limtoken@false\ifx\ilimits@\nolimits\limits@false\else
 \ifinner\limits@false\else\limits@true\fi\fi\fi\fi}%
\def\multint@{\int\ifnum\intno@=\z@\intdots@                          %1
 \else\intkern@\fi                                                    %2
 \ifnum\intno@>\tw@\int\intkern@\fi                                   %3
 \ifnum\intno@>\thr@@\int\intkern@\fi                                 %4
 \int}%                                                               %5
\def\multintlimits@{\intop\ifnum\intno@=\z@\intdots@\else\intkern@\fi
 \ifnum\intno@>\tw@\intop\intkern@\fi
 \ifnum\intno@>\thr@@\intop\intkern@\fi\intop}%
\def\intic@{%
    \mathchoice{\hskip.5em}{\hskip.4em}{\hskip.4em}{\hskip.4em}}%
\def\negintic@{\mathchoice
 {\hskip-.5em}{\hskip-.4em}{\hskip-.4em}{\hskip-.4em}}%
\def\ints@@{\iflimtoken@                                              %1
 \def\ints@@@{\iflimits@\negintic@
   \mathop{\intic@\multintlimits@}\limits                             %2
  \else\multint@\nolimits\fi                                          %3
  \eat@}%                                                             %4
 \else                                                                %5
 \def\ints@@@{\iflimits@\negintic@
  \mathop{\intic@\multintlimits@}\limits\else
  \multint@\nolimits\fi}\fi\ints@@@}%
\def\intkern@{\mathchoice{\!\!\!}{\!\!}{\!\!}{\!\!}}%
\def\plaincdots@{\mathinner{\cdotp\cdotp\cdotp}}%
\def\intdots@{\mathchoice{\plaincdots@}%
 {{\cdotp}\mkern1.5mu{\cdotp}\mkern1.5mu{\cdotp}}%
 {{\cdotp}\mkern1mu{\cdotp}\mkern1mu{\cdotp}}%
 {{\cdotp}\mkern1mu{\cdotp}\mkern1mu{\cdotp}}}%
\def\RIfM@{\relax\protect\ifmmode}
\def\text{\RIfM@\expandafter\text@\else\expandafter\mbox\fi}
\let\nfss@text\text
\def\text@#1{\mathchoice
   {\textdef@\displaystyle\f@size{#1}}%
   {\textdef@\textstyle\tf@size{\firstchoice@false #1}}%
   {\textdef@\textstyle\sf@size{\firstchoice@false #1}}%
   {\textdef@\textstyle \ssf@size{\firstchoice@false #1}}%
   \glb@settings}
\def\textdef@#1#2#3{\hbox{{%
                    \everymath{#1}%
                    \let\f@size#2\selectfont
                    #3}}}
\newif\iffirstchoice@
\def\Let@{\relax\iffalse{\fi\let\\=\cr\iffalse}\fi}%
\def\vspace@{\def\vspace##1{\crcr\noalign{\vskip##1\relax}}}%
\def\multilimits@{\bgroup\vspace@\Let@
 \baselineskip\fontdimen10 \scriptfont\tw@
 \advance\baselineskip\fontdimen12 \scriptfont\tw@
 \lineskip\thr@@\fontdimen8 \scriptfont\thr@@
 \lineskiplimit\lineskip
 \vbox\bgroup\ialign\bgroup\hfil$\m@th\scriptstyle{##}$\hfil\crcr}%
\def\Sb{_\multilimits@}%
\def\endSb{\crcr\egroup\egroup\egroup}%
\def\Sp{^\multilimits@}%
\newdimen\ex@
\def\rightarrowfill@#1{$#1\m@th\mathord-\mkern-6mu\cleaders
 \hbox{$#1\mkern-2mu\mathord-\mkern-2mu$}\hfill
 \mkern-6mu\mathord\rightarrow$}%
\def\leftarrowfill@#1{$#1\m@th\mathord\leftarrow\mkern-6mu\cleaders
 \hbox{$#1\mkern-2mu\mathord-\mkern-2mu$}\hfill\mkern-6mu\mathord-$}%
\def\leftrightarrowfill@#1{$#1\m@th\mathord\leftarrow
\mkern-6mu\cleaders
 \hbox{$#1\mkern-2mu\mathord-\mkern-2mu$}\hfill
 \mkern-6mu\mathord\rightarrow$}%
\def\overrightarrow{\mathpalette\overrightarrow@}%
\def\overrightarrow@#1#2{\vbox{\ialign{##\crcr\rightarrowfill@#1\crcr
 \noalign{\kern-\ex@\nointerlineskip}$\m@th\hfil#1#2\hfil$\crcr}}}%
\def\overleftarrow{\mathpalette\overleftarrow@}%
\def\overleftarrow@#1#2{\vbox{\ialign{##\crcr\leftarrowfill@#1\crcr
 \noalign{\kern-\ex@\nointerlineskip}$\m@th\hfil#1#2\hfil$\crcr}}}%
\def\overleftrightarrow{\mathpalette\overleftrightarrow@}%
\def\overleftrightarrow@#1#2{\vbox{\ialign{##\crcr
   \leftrightarrowfill@#1\crcr
 \noalign{\kern-\ex@\nointerlineskip}$\m@th\hfil#1#2\hfil$\crcr}}}%
\def\underrightarrow{\mathpalette\underrightarrow@}%
\def\underrightarrow@#1#2{\vtop{\ialign{##\crcr$\m@th\hfil#1#2\hfil
  $\crcr\noalign{\nointerlineskip}\rightarrowfill@#1\crcr}}}%
\def\underleftarrow{\mathpalette\underleftarrow@}%
\def\underleftarrow@#1#2{\vtop{\ialign{##\crcr$\m@th\hfil#1#2\hfil
  $\crcr\noalign{\nointerlineskip}\leftarrowfill@#1\crcr}}}%
\def\underleftrightarrow{\mathpalette\underleftrightarrow@}%
\def\underleftrightarrow@#1#2{\vtop{\ialign{##\crcr$\m@th
  \hfil#1#2\hfil$\crcr
 \noalign{\nointerlineskip}\leftrightarrowfill@#1\crcr}}}%
\def\qopnamewl@#1{\mathop{\operator@font#1}\nlimits@}
\let\nlimits@\displaylimits
\def\setboxz@h{\setbox\z@\hbox}
\def\varlim@#1#2{\mathop{\vtop{\ialign{##\crcr
 \hfil$#1\m@th\operator@font lim$\hfil\crcr
 \noalign{\nointerlineskip}#2#1\crcr
 \noalign{\nointerlineskip\kern-\ex@}\crcr}}}}
 \def\rightarrowfill@#1{\m@th\setboxz@h{$#1-$}\ht\z@\z@
  $#1\copy\z@\mkern-6mu\cleaders
  \hbox{$#1\mkern-2mu\box\z@\mkern-2mu$}\hfill
  \mkern-6mu\mathord\rightarrow$}
\def\leftarrowfill@#1{\m@th\setboxz@h{$#1-$}\ht\z@\z@
  $#1\mathord\leftarrow\mkern-6mu\cleaders
  \hbox{$#1\mkern-2mu\copy\z@\mkern-2mu$}\hfill
  \mkern-6mu\box\z@$}
\def\projlim{\qopnamewl@{proj\,lim}}
\def\injlim{\qopnamewl@{inj\,lim}}
\def\varinjlim{\mathpalette\varlim@\rightarrowfill@}
\def\varprojlim{\mathpalette\varlim@\leftarrowfill@}
\def\varliminf{\mathpalette\varliminf@{}}
\def\varliminf@#1{\mathop{\underline{\vrule\@depth.2\ex@\@width\z@
   \hbox{$#1\m@th\operator@font lim$}}}}
\def\varlimsup{\mathpalette\varlimsup@{}}
\def\varlimsup@#1{\mathop{\overline
  {\hbox{$#1\m@th\operator@font lim$}}}}
\def\align{\@verbatim \frenchspacing\@vobeyspaces \@alignverbatim
You are using the "align" environment in a style in which it is not defined.}
\let\csname endalign*\endcsname =\endtrivlist
\def\alignat{\@verbatim \frenchspacing\@vobeyspaces \@alignatverbatim
You are using the "alignat" environment in a style in which it is not defined.}
\let\csname endalignat*\endcsname =\endtrivlist
\def\xalignat{\@verbatim \frenchspacing\@vobeyspaces \@xalignatverbatim
You are using the "xalignat" environment in a style in which it is not defined.}
\let\csname endxalignat*\endcsname =\endtrivlist
\def\gather{\@verbatim \frenchspacing\@vobeyspaces \@gatherverbatim
You are using the "gather" environment in a style in which it is not defined.}
\let\csname endgather*\endcsname =\endtrivlist
\def\multiline{\@verbatim \frenchspacing\@vobeyspaces \@multilineverbatim
You are using the "multiline" environment in a style in which it is not defined.}
\let\csname endmultiline*\endcsname =\endtrivlist
\def\arrax{\@verbatim \frenchspacing\@vobeyspaces \@arraxverbatim
You are using a type of "array" construct that is only allowed in AmS-LaTeX.}
\def\tabulax{\@verbatim \frenchspacing\@vobeyspaces \@tabulaxverbatim
You are using a type of "tabular" construct that is only allowed in AmS-LaTeX.}
\let\csname endarrax*\endcsname =\endtrivlist
\let\csname endtabulax*\endcsname =\endtrivlist
\def\@@eqncr{\let\@tempa\relax
    \ifcase\@eqcnt \def\@tempa{& & &}\or \def\@tempa{& &}%
      \else \def\@tempa{&}\fi
     \@tempa
     \if@eqnsw
        \iftag@
           \@taggnum
        \else
           \@eqnnum\stepcounter{equation}%
        \fi
     \fi
     \global\tag@false
     \global\@eqnswtrue
     \global\@eqcnt\z@\cr}
 \def\endequation{%
     \ifmmode\ifinner % FLEQN hack
      \iftag@
        \addtocounter{equation}{-1} % undo the increment made in the begin part
        $\hfil
           \displaywidth\linewidth\@taggnum\egroup \endtrivlist
        \global\tag@false
        \global\@ignoretrue   
      \else
        $\hfil
           \displaywidth\linewidth\@eqnnum\egroup \endtrivlist
        \global\tag@false
        \global\@ignoretrue 
      \fi
     \else   
      \iftag@
        \addtocounter{equation}{-1} % undo the increment made in the begin part
        \eqno \hbox{\@taggnum}
        \global\tag@false%
        $$\global\@ignoretrue
      \else
        \eqno \hbox{\@eqnnum}% $$ BRACE MATCHING HACK
        $$\global\@ignoretrue
      \fi
     \fi\fi
 } 
 \newif\iftag@ \tag@false
 \def\tag{\@ifnextchar*{\@tagstar}{\@tag}}
 \def\@tag#1{%
     \global\tag@true
     \global\def\@taggnum{(#1)}}
 \def\@tagstar*#1{%
     \global\tag@true
     \global\def\@taggnum{#1}%  
}
\begin{document}

\title{Identifiability Analysis of Noise Covariances for LTI Stochastic
Systems with Unknown Inputs}
\author{He Kong, Salah Sukkarieh, Travis J. Arnold, Tianshi Chen, Biqiang
Mu, and Wei Xing Zheng, \IEEEmembership{Fellow, IEEE} \thanks{\textbf{This
paper is formally accepted to and going to appear in \textit{IEEE
Transactions on Automatic Control}}. Corresponding author: He Kong.} \thanks{%
He Kong is with the Shenzhen Key Laboratory of Biomimetic Robotics and
Intelligent Systems, Department of Mechanical and Energy Engineering,
Southern University of Science and Technology (SUSTech), Shenzhen, 518055,
China; he is also affiliated with the Guangdong Provincial Key Laboratory of
Human-Augmentation and Rehabilitation Robotics in Universities, SUSTech,
Shenzhen, 518055, China (e-mail: kongh@sustech.edu.cn). Salah Sukkarieh is
with the Sydney Institute for Robotics and Intelligent Systems, The
University of Sydney, NSW 2006, Australia (e-mail:
salah.sukkarieh@sydney.edu.au). Travis J. Arnold is an independent
researcher, Madison, WI 53704, United States (e-mail:
travis.arnold17@gmail.com). Tianshi Chen is with the School of Data Science
and Shenzhen Research Institute of Big Data, The Chinese University of Hong
Kong, Shenzhen, 518172, China (e-mail: tschen@cuhk.edu.cn). Biqiang Mu is
with the Key Laboratory of Systems and Control, Institute of Systems
Science, Academy of Mathematics and Systems Science, Chinese Academy of
Sciences, Beijing 100190, China (e-mail: bqmu@amss.ac.cn). Wei Xing Zheng is
with the School of Computer, Data and Mathematical Sciences, Western Sydney
University, Sydney, NSW 2751, Australia (e-mail:
w.zheng@westernsydney.edu.au).}}
\maketitle

\begin{abstract}
Most existing works on optimal filtering of linear time-invariant (LTI)
stochastic systems with arbitrary unknown inputs assume perfect knowledge of
the covariances of the noises in the filter design. This is impractical and
raises the question of whether and under what conditions one can identify
the process and measurement noise covariances (denoted as $Q$ and $R$,
respectively) of systems with unknown inputs. This paper considers the
identifiability of $Q$/$R$ using the correlation-based measurement
difference approach. More specifically, we establish (i) necessary
conditions under which $Q$ and $R$ can be uniquely jointly identified; (ii)
necessary and sufficient conditions under which $Q$ can be uniquely
identified, when $R$ is known; (iii) necessary conditions under which $R$
can be uniquely identified, when $Q$ is known. It will also be shown that
for achieving the results mentioned above, the measurement difference
approach requires some decoupling conditions for constructing a stationary
time series, which are proved to be sufficient for the well-known strong
detectability requirements established by Hautus.
\end{abstract}

%\thanks{Travis J. Arnold is with the Department of Chemical and Biological Engineering, University of Wisconsin, Madison, WI, 53706 USA (e-mail: tjarnold@wisc.edu).}
%\thanks{Tianshi Chen is with the School of Data Science and Shenzhen Research Institute of Big Data, The Chinese University of Hong Kong, Shenzhen,
%518172, China (tschen@cuhk.edu.cn).}
%\thanks{Wei Xing Zheng is with the School of Computer, Data and Mathematical Sciences, Western Sydney University, Sydney, NSW 2751, Australia %(w.zheng@westernsydney.edu.au).}}

\begin{IEEEkeywords}
Estimation; Arbitrary unknown input; Kalman filter; Noise covariance estimation.
\end{IEEEkeywords}

\section{Introduction}

Estimation under unknown inputs (whose models or statistical properties are
not assumed to be available), also called unknown input decoupled
estimation, has received much attention in the past. In the existing
literature, many uncertain phenomena in control systems have been modeled as
unknown inputs, including system faults/attacks \cite{Johansen2014}-\cite%
{Yang2019}, abrupt/impulsive disturbances or parameters \cite{Ohlsson2012}-%
\cite{Chai2017}, arbitrary vehicle tires/ground interactions \cite%
{Johansen2007}, etc. A seminal work on unknown input decoupled estimation is
due to Hautus \cite{Hautus1983} where it has been shown that the strong
detectability criterion, including a rank matching condition and the system
being minimum phase requirement, is necessary and sufficient for the
existence of a stable observer for estimating the state/unknown input for
deterministic systems\footnote{%
The strong$^{\ast }$ detectability concept was also introduced in \cite%
{Hautus1983}. The two criteria, as discussed in \cite{Hautus1983}, are
equivalent for discrete-time systems, but differ for continuous systems.}.

Works on the filtering case, e.g., \cite{Darouach2003}-\cite{Su2015}, have
similar rank matching and system being minimum phase requirements as in \cite%
{Hautus1983}. Extensions to cases with rank-deficient shaping matrices have
been discussed in \cite{Hsieh2009}-\cite{Frazzoli2016}. It has also been
shown in the above works that for unbiased and minimum variance estimation
of the state/unknown input, the initial guess of the state must be unbiased.
Very recently, connections between the above-mentioned results and Kalman
filtering (KF) of systems within which the unknown input is taken to be a
white noise of unbounded variance, have been established in \cite%
{Bitmead2019}. There are also some works dedicated to alleviating the strong
detectability conditions and the unbiased initialization requirement (see
\cite{Kong2019Auto}-\cite{Kong2020Auto} and the references therein).

However, most existing filtering works mentioned above assume that the
process and measurement noise covariances (denoted as $Q$ and $R$,
respectively) are perfectly known for the optimal filter design. This raises
the question of whether and under what conditions one can identify $Q/R$
from real data. We believe that addressing the identifiability issue of
noise covariances under arbitrary unknown inputs is important because in
practice the noise covariances are not known \textit{a priori} and have to
be identified from real closed-loop data where there might be unknown system
uncertainties such as faults, etc. Another relevant application is path
planning of sensing robots for tracking targets whose motions might be
subject to abrupt disturbances (in the form of unknown inputs), as
considered in our recent work \cite{Kong2021}.

To the best of our knowledge, \cite{Yu2016}-\cite{Pan2013} are the only
existing works on identification of stochastic systems under unknown inputs.
However, in the former two works, the unknown inputs are assumed to be a
wide-sense stationary process with rational power spectral density or
deterministic but unknown signals, respectively. Here, we do not make such
assumptions. Also, we are mainly interested to investigate the
identifiability of the original noise covariances for linear time-invariant
(LTI) stochastic systems with unknown inputs. This is in contrast to the
work in \cite{Yu2016} where the measurement noise covariance of the
considered system is assumed to be known, and the input autocorrelations are
identified from the output data and then used for input realization and
filter design. Our work is also different from subspace identification where
the stochastic parameters of the system are estimated, which can be used to
calculate the optimal estimator gain \cite{Gevers2006}. It should be
remarked that apart from filter design, knowledge of noise covariances can
also be used for other purposes such as performance monitoring \cite%
{Moni2011}.

We note that noise covariance estimation is a topic of lasting interest for
the systems and control community, and the literature is fairly mature.
Existing noise covariance estimation methods can be classified as Bayesian,
maximum likelihood, covariance matching, and correlation techniques, etc.,
(see \cite{Hu2011}-\cite{Arnold2020} and the references therein).
Especially, the correlation methods can be classified into two groups where
the state/measurement prediction error (or measurement difference), as a
stochastic time series, is computed either explicitly via a stable filter
(for example, the autocovariance least-squares (ALS) framework in \cite%
{Rawlings2006}-\cite{Rawlings2009}) or implicitly by manipulating the
measurements (see \cite{Xia2014} for the case using one step measurement,
and \cite{Dunik2018}-\cite{Moghe2019} for the case using multi-step
measurements, respectively, in computing the measurement differences).

Still, most above-mentioned noise covariance estimation methods have not
considered the case with unknown inputs. This observation motivates us to
study the identifiability of $Q/R$ for systems under unknown inputs.
Especially, we adopt the correlation-based methodology, and mainly discuss
the implicit correlation-based frameworks, in particular, the measurement
difference approach using single-step measurement.

Moreover, given that this paper focuses on the identifiability of $Q$/$R$
via the measurement difference approach using single-step measurement, some
of the assumptions (e.g., the output matrix $C$ is assumed to be of full
column rank) seem to be stringent. Nevertheless, we believe the
consideration of the case using single-step measurement serves as the first
crucial step to fully understand the identifiability of $Q$/$R$ under the
presence of unknown inputs. A thorough investigation of the more general
case using multi-step measurements is the subject of our current and future
work.

Finally, we remark that the considered problem is inherently a theoretical
one, although we are motivated by its potential applications in practice.
However, we believe that addressing the considered question specifically for
LTI systems is the first step towards a more thorough understanding on the
topic.
%Moreover, extensions to other cases (e.g., noise covariance identification of linear time-varying systems with unknown inputs) that might be more relevant or suitable for practice are the subject of our future work.}

The remainder of the paper is structured as follows. In Section II, we
recall preliminaries on estimation of systems with unknown inputs. Section
III contains our major results for the single-step measurement case. Section
IV illustrates the theoretical results with some numerical examples. Section
V concludes the paper.

\textbf{Notation:} $A^{\mathrm{T}}$ denotes the transpose of matrix $A$. $%
\mathbf{R}^{n}$ stands for the $n$-dimensional Euclidean space. $I_{n}$
stands for the identity matrix of dimension $n$. $\mathbf{0}$ stands for the
zero matrices with compatible dimensions. $\mathbb{C}$ and $\left\vert
z\right\vert $ denote the field of complex numbers, and the absolute value
of a given complex number $z$, respectively.
% \textcolor[rgb]{1.00,0.00,0.00}{Define $X_{s}$ as the outcome of applying the
% vectorization operator to matrix $X$.} $[a_{1},\cdots ,a_{n}]$ denotes $%
% [a_{1}^{\mathrm{T}}\cdots a_{n}^{\mathrm{T}}]^{\mathrm{T}}, $where $%
% a_{1},\cdots ,a_{n}\ $are scalars/vectors/matrices. Vectors/matrices, with
% dimensions not stated, are assumed to be compatible for algebraic
% operations.
%$diag(A_{1},A_{2})$ denotes a block diagonal matrix with $A_{1}$ and $A_{2}$
%as block diagonal entries, and $diag_{n}(R)$ denotes a block diagonal matrix
%with $R$ as block diagonal entries for $n$ times. $\mathbf{0}_{n}$ stand for
%column vectors with all entries as $0$.

\section{\label{lyap}Preliminaries and Problem Statement}

We consider the discrete-time LTI model of the plant:
\begin{subequations}
\begin{align}
x_{k+1}=Ax_{k}+Bd_{k}+Gw_{k} \\
y_{k}=Cx_{k}+Dd_{k}+v_{k}\text{ \ }
\end{align}
\label{plant}
\end{subequations}
where $x_{k}\in \mathbf{R}^{n},$ $d_{k}\in \mathbf{R}^{q}$, and
$y_{k}\in \mathbf{R}^{p}$ are the state, the unknown input, and the
output, respectively; $w_{k}\in \mathbf{R}^{g}$ and $v_{k}\in
\mathbf{R}^{p}$ represent zero-mean mutually uncorrelated process
and measurement noises with covariances $Q\in \mathbf{R}^{g\times
g}$ and $R\in \mathbf{R}^{p\times p}$, respectively; $A,B,G,C,$ and
$D$ are real and known matrices with appropriate dimensions; the
pair $(A,C)$ is assumed to be detectable.
Without loss of generality, we assume $n\geq g$ and $G\in \mathbf{R}%
^{n\times g}$ to be of full column rank (when this is not the case, one can
remodel the system to obtain a full rank shaping matrix $\overline{G}$).

For system (\ref{plant}), a major question of interest is the existence
condition of an observer/filter that can estimate the state/unknown input
with asymptotically stable error, using only the output. To address these
questions, concepts such as strong detectability and strong estimator have
been rigorously discussed in \cite{Hautus1983} for deterministic systems%
\footnote{%
Extensions of the strong detectability to linear stochastic systems have
been discussed in \cite{Frazzoli2016}.}. As remarked in \cite{Hautus1983},
the term \textquotedblleft strong\textquotedblright\ is to emphasize that
state estimation has to be obtained without knowing the unknown input. For
later use, we include the strong detectability conditions in the sequel.
Note, however, that the measurement-difference approach does not require
strong detectability since we do not need to design a filter to explicitly
estimate the state/unknown input. Instead, we manipulate the system outputs
to implicitly estimate the state and construct a stationary time series. The
required conditions associated with the measurement-difference approach are
different from the strong detectability conditions and presented in
Proposition \ref{case_abc} and Theorem \ref{decouple_sufficent}.

\begin{lemma}
\label{condition}(\cite{Hautus1983}) The following statements hold
true: (i) the system (\ref{plant}) has a strong estimator if and
only if it is strongly detectable; (ii) system (\ref{plant}) is
strongly detectable if and only if
\begin{equation}
rank\left( \left[
\begin{array}{cc}
CB & D \\
D & 0%
\end{array}%
\right] \right) =rank(D)+rank\left( \left[
\begin{array}{c}
B \\
D%
\end{array}%
\right] \right) ,  \label{rankmatching}
\end{equation}%
and all its invariant zeros are stable, i.e.,%
\begin{equation}
rank\left( \underset{\mathcal{M}\left( z\right) }{\underbrace{\left[
\begin{array}{cc}
zI_{n}-A & -B \\
C & D%
\end{array}%
\right] }}\right) =n+rank\left( \left[
\begin{array}{c}
B \\
D%
\end{array}%
\right] \right) ,  \label{miniphase}
\end{equation}%
for all $z\in \mathbb{C}\ $and $\left\vert z\right\vert \geq 1$.
\end{lemma}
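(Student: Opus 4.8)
The plan is to follow the geometric/structural route of \cite{Hautus1983}, since Lemma~\ref{condition} is precisely the strong-detectability characterization established there; in the paper itself the proof can be discharged by citing that reference, and the sketch below is only meant to indicate why the two displayed rank conditions are the natural ones. For part (i), I would first fix the definitions: a strong estimator is a causal recursion driven by $\{y_k\}$ whose state-estimate error $e_k$ satisfies $e_k\to 0$ for every initial condition and every unknown-input sequence $\{d_k\}$, while the system is strongly detectable when $y_k\equiv 0$ with $d_k$ free forces $x_k\to 0$. Sufficiency is by explicit construction: run the structure algorithm on $(A,B,C,D)$, peel off from one step of differenced outputs the part of $d_k$ that enters directly, and drive the remaining error dynamics with the residual; stability of that recursion is exactly what the conditions in part (ii) secure. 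Necessity is immediate, since a nonzero output-nulling trajectory can never be distinguished from the zero trajectory by a recursion that sees only $\{y_k\}$, so no strong estimator can exist when strong detectability fails.

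For part (ii), the central object is the output-nulling (weakly unobservable) subspace $\mathcal{V}^{\ast}$ together with the input behaviour that holds the output at zero. I would proceed in three steps. (a) Run a Molinari/Silverman-type reduction on $\mathcal{M}(z)$ to bring $(A,B,C,D)$ into a normal form that exhibits the invariant-zero dynamics explicitly. (b) Show that the rank-matching identity \dref{rankmatching} is precisely the statement that there is no ``infinite-zero'' obstruction beyond the first Markov parameter $CB$ --- equivalently, that the output-nulling input is uniquely and causally recoverable from a single step of differenced measurements; this is the point at which one tracks how the ranks of $D$, of $[B^{\mathrm{T}}\ D^{\mathrm{T}}]^{\mathrm{T}}$, and of the block matrix appearing in \dref{rankmatching} combine. (c) Conclude that, under \dref{rankmatching}, the residual zero-output dynamics are governed by a matrix whose spectrum equals the set of invariant zeros of \dref{plant}; hence strong detectability holds iff that spectrum lies in the open unit disc, which is exactly the full-column-rank requirement \dref{miniphase} on $\mathcal{M}(z)$ for all $|z|\ge 1$. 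The standing detectability of $(A,C)$ takes care of the component of the state that the unknown input does not excite.

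The main obstacle is steps (a)--(b): making the normal-form reduction rigorous in the discrete-time setting and proving that the compact identity \dref{rankmatching} is equivalent to the ``no extra infinite zeros'' condition demanded after one iteration of the structure algorithm. That is the bookkeeping-heavy core of the argument; once it is in place, relating the leftover dynamics to the invariant zeros of $\mathcal{M}(z)$, and thus to \dref{miniphase}, and then assembling (i) from (ii), is routine linear algebra. Since \cite{Hautus1983} carries all of this out in full, it suffices for the present paper to invoke that result.
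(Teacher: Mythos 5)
The paper offers no proof of Lemma~\ref{condition} at all---it is stated as a quoted result from \cite{Hautus1983}---and your proposal correctly recognizes this and concludes that invoking that reference suffices, which is exactly what the paper does. Your accompanying sketch of the underlying argument (output-nulling subspace, structure algorithm, invariant zeros) is a reasonable account of the classical route and contains no errors that would matter here, so nothing further is required.
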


Conditions (\ref{rankmatching})-(\ref{miniphase}) are the so-called rank
matching and minimum phase requirements, respectively. Note that Lemma \ref%
{condition} holds for both the deterministic and stochastic cases (hence we
use \textquotedblleft estimator\textquotedblright\ instead of KF/Luenberger
observer; for more detailed discussions on the design and stability of KF
under unknown inputs, we refer the reader to \cite{Darouach2003}-\cite%
{Frazzoli2016} and the references therein. For system (\ref{plant}), the
noise covariances $Q$ and $R$ are usually not available, and have to be
identified from data. However, all existing filtering methods for systems
with unknown inputs in the literature adopt the assumption of knowing $Q$
and $R$ exactly, which is not practical. The identificability questions of $Q
$ and/or $R$ considered in this paper are formally stated as follows:

\begin{problem}
\label{problem1}Given system (\ref{plant}) with unknown inputs, and known $%
A,B,G,C,$ and $D$, we aim to investigate the following questions: using the
measurement difference approach, whether and under what conditions one can
(i) uniquely jointly identify $Q$ and $R$; (ii) uniquely identify $Q$ or $R$%
, assuming the other covariance to be known.
\end{problem}

\section{\label{internal}Identifiability of Q/R Using the Single-step
Measurement Difference Approach}

This section contains the first major results of the paper. We show that, in
theory, the single-step measurement difference approach does not have a
unique solution for jointly estimating $Q$ and $R$ of system (\ref{plant}).
Estimating $Q$ or $R$, assuming the other to be known, will also be
considered. For deriving the results in this section, we will assume that $C$
is of full column rank. We remark that although the assumption on $C$ is
restrictive, the discussions in the sequel bring some insights into the
identifiability study of $Q$/$R$, i.e., even with the above stringent
assumption, it will be shown that only under restrictive conditions, $Q$/$R$
can be uniquely identified.

%On the other hand, the case when $C$ is not of full
%rank can be handled by the multi-step measurement difference approach, and will be discussed in the next section. In fact, as it will be discussed in the next section, there are some differences and similarities between the single-step and multi-step measurement difference approaches. Hence, the consideration of the two approaches helps to build a better and arguably more thorough understanding of the problem at hand, and we will include the discussions on both methods for completeness.

\subsection{Conditions for obtaining an unknown input decoupled stationary
time series}

When $C$ is of full column rank, from (\ref{plant}), it can be obtained that%
\begin{equation}
\begin{array}{l}
y_{k+1}=Cx_{k+1}+Dd_{k+1}+v_{k+1} \\
\text{ \ \ \ \ \ }=CAx_{k}+CBd_{k}+CGw_{k}+Dd_{k+1}+v_{k+1},%
\end{array}
\label{output}
\end{equation}%
and%
\begin{equation}
x_{k}=My_{k}-MDd_{k}-Mv_{k},  \label{state}
\end{equation}%
where%
\begin{equation}
M=(C^{\mathrm{T}}C)^{-1}C^{\mathrm{T}}.  \label{M}
\end{equation}%
By substituting (\ref{state}) into (\ref{output}), one has that
\begin{equation*}
\begin{array}{l}
\overline{z}_{k}=y_{k+1}-CAMy_{k}=(CB-CAMD)d_{k} \\
\text{ \ \ \ \ \ \ }+Dd_{k+1}+CGw_{k}+v_{k+1}-CAMv_{k}.%
\end{array}%
\end{equation*}%
Given that we do not assume to have any knowledge of the unknown input, it
is not possible for us to conduct any analysis of the statistical properties
of $\overline{z}_{k}$. Hence, a necessary and sufficient condition to
decouple the influence of the unknown input on $\overline{z}_{k}$ is the
existence of a nonzero matrix $K\in \mathbf{R}^{r\times p}$ such that%
\begin{equation}
\begin{array}{l}
z_{k}=K\overline{z}_{k}=K(CB-CAMD)d_{k}+KDd_{k+1} \\
\text{ \ \ \ \ \ \ }+KCGw_{k}+Kv_{k+1}-KCAMv_{k},%
\end{array}
\label{measurement_diff}
\end{equation}%
with%
\begin{equation}
K\underset{H\in \mathbf{R}^{p\times 2q}}{\underbrace{\left[
\begin{array}{ll}
C(B-AMD) & D%
\end{array}%
\right] }}=0.  \label{necessary_con1}
\end{equation}

\begin{remark}
\label{rem1} For the single-step measurement difference approach, later we
will establish (i) necessary conditions under which $Q$ and $R$ can be
uniquely jointly identified (see Proposition \ref{ALS_fullrank}); (ii)
necessary and sufficient conditions under which $Q$ can be uniquely
identified, when $R$ is known (see Proposition \ref{only_Q} and Corollary %
\ref{corco_estimateQ}); (iii) necessary conditions under which $R$ can be
uniquely identified, when $Q$ is known (see Proposition \ref{corco_estimateR}%
). Moreover, it will be shown that for achieving the results mentioned
above, the measurement difference approach requires some decoupling
conditions for constructing a stationary time series (see Proposition \ref%
{case_abc}). The latter conditions are proved to be sufficient (see Theorem %
\ref{decouple_sufficent}) for the strong detectability requirement in \cite%
{Hautus1983}. Also, if the existence conditions on $K$ are satisfied, then
one can use standard techniques to calculate $K$ \cite[Chap. 6]{Laub2005}.
\end{remark}

There are a few potential scenarios when (\ref{necessary_con1}) holds:%
\begin{equation}
\begin{array}{l}
\text{(a) }C(B-AMD)\neq 0\text{, }D\neq 0\text{,} \\
\text{(b) }C(B-AMD)=CB\neq 0\text{, }D=0\text{,} \\
\text{(c) }C(B-AMD)=0\text{, }D\neq 0\text{,} \\
\text{(d) }C(B-AMD)=CB=0\text{, }D=0\text{.}%
\end{array}
\label{four_cases}
\end{equation}%
Note that since $C$ is assumed to be of full column rank, case (d) in (\ref%
{four_cases}) cannot happen. This is because when $C$ is of full column rank,%
\begin{equation}
\begin{array}{l}
C(B-AMD)=CB=0\text{, }D=0 \\
\Rightarrow CB=0,\text{ }D=0, \\
\Rightarrow B=0,\text{ }D=0,%
\end{array}
\label{unknown_vanish}
\end{equation}%
i.e., the unknown input $d_{k}$ vanishes in system (\ref{plant}). Note that
here in this work we focus on the case with unknown inputs, i.e., the
situation of $B=0$ and $D=0$ is not applicable. For cases (a)-(c), we have
the following immediate results.

\begin{proposition}
\label{case_abc}Given system (\ref{plant}) with $C$ being of full column
rank, then the following statements hold true:

(i) for case (a) in (\ref{four_cases}), there exists a matrix $K$ such that
the equality in (\ref{necessary_con1}) holds if and only if%
\begin{equation}
rank(H)=2q,  \label{nece1}
\end{equation}%
where, $H$ is defined in (\ref{necessary_con1}); for condition (\ref{nece1})
to hold, it is necessary that $rank(B-AMD)=q$, $n\geq q$, $rank(D)=q$, $%
p\geq 2q$;

(ii) for case (b) in (\ref{four_cases}), there exists a matrix $K$ such that
the equality in (\ref{necessary_con1}) holds if and only if $rank(B)=q$;

(iii) for case (c) in (\ref{four_cases}), there exists a matrix $K$ such
that the equality in (\ref{necessary_con1}) holds if and only if $B-AMD=0$, $%
rank(D)=q$.
\end{proposition}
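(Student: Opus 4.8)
The plan is to treat Proposition \ref{case_abc} as a piece of linear algebra about the fixed matrix $H=\left[\begin{array}{ll} C(B-AMD) & D\end{array}\right]\in\mathbf{R}^{p\times 2q}$ from \dref{necessary_con1}: condition \dref{necessary_con1} says exactly that the rows of $K$ lie in the left null space $\mathcal{N}=\{v\in\mathbf{R}^{p}:v^{\mathrm{T}}H=\mathbf{0}\}$, and $\dim\mathcal{N}=p-rank(H)$. Since \dref{measurement_diff} is useful only if $K$ strips off the unknown-input directions while keeping every other direction, ``a matrix $K$ exists'' is to be read as: the rows of $K$ can be taken to span all of $\mathcal{N}$ and the decoupled series $z_k=K\overline{z}_k$ then has the generic reduced dimension, which is the same as demanding that the unknown-input block(s) of $H$ have full column rank. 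First I would record the one structural fact used throughout: because $C$ has full column rank, $M=(C^{\mathrm{T}}C)^{-1}C^{\mathrm{T}}$ satisfies $MC=I_{n}$, so $CX=\mathbf{0}\Leftrightarrow X=\mathbf{0}$ and $rank(CX)=rank(X)$ for any conformable $X$; this is what lets me interchange $C(B-AMD)$ with $B-AMD$ and $rank(CB)$ with $rank(B)$. Then I would split along the three surviving cases of \dref{four_cases} by which block of $H$ is actually present.

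For case (a) both blocks are nonzero, so $H$ genuinely occupies $2q$ columns, and I would prove ``a $K$ exists $\Leftrightarrow rank(H)=2q$''. For ``$\Leftarrow$'': when $rank(H)=2q$ the space $\mathcal{N}$ has dimension $p-2q\geq 0$ and any basis of it, stacked into a matrix, is an admissible $K$. For ``$\Rightarrow$'': if $H$ were rank-deficient then fewer than $2q$ independent unknown-input directions would need removing, so the resulting $z_k$ would retain more than the generic $p-2q$ directions, contrary to the construction behind \dref{measurement_diff}. The four necessary conditions then fall out by counting on $H\in\mathbf{R}^{p\times 2q}$: $rank(H)=2q$ forces $p\geq 2q$; its first $q$ columns $C(B-AMD)$ are linearly independent, so $rank(C(B-AMD))=q$, hence $rank(B-AMD)=q$ by the left-invertibility of $C$, and since $B-AMD\in\mathbf{R}^{n\times q}$ this needs $n\geq q$; its last $q$ columns $D$ are independent, so $rank(D)=q$.

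Cases (b) and (c) are the degenerate-block versions. In case (b), $D=\mathbf{0}$ forces $AMD=\mathbf{0}$, so $C(B-AMD)=CB$ and $H=\left[\begin{array}{ll} CB & \mathbf{0}\end{array}\right]$; then \dref{necessary_con1} is simply $KCB=\mathbf{0}$, $\dim\mathcal{N}=p-rank(CB)=p-rank(B)$, and the same full-column-rank reasoning says an admissible $K$ exists iff $rank(B)=q$. In case (c), $C(B-AMD)=\mathbf{0}$ together with the injectivity of $C$ already gives $B-AMD=\mathbf{0}$, so that clause of the stated condition is immediate from the case hypothesis, and $H=\left[\begin{array}{ll} \mathbf{0} & D\end{array}\right]$, so \dref{necessary_con1} reduces to $KD=\mathbf{0}$; applying the full-column-rank argument to the $p\times q$ block $D$ gives an admissible $K$ iff $rank(D)=q$. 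Case (d) does not occur: as already observed in \dref{unknown_vanish}, with $C$ injective it would force $B=\mathbf{0},D=\mathbf{0}$.

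The hard part will be the ``only if'' directions, or rather pinning down precisely what is being asked of $K$ beyond annihilating $H$: that the decoupled $z_k$ keep every non-unknown-input direction, equivalently that the rows of $K$ exhaust $\mathcal{N}$, equivalently that the unknown-input block of $H$ have full column rank. Once that is made explicit, the three equivalences are nearly definitional, and the inequalities $rank(B-AMD)=q$, $n\geq q$, $rank(D)=q$, $p\geq 2q$ are read off from the shape of $H$ by inspection.
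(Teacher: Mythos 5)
Your overall route is the same as the paper's: the proposition is treated as pure linear algebra about the solvability of $KH=0$, with the left-invertibility of $C$ (i.e., $MC=I_{n}$, so left-multiplication by $C$ preserves rank) used to pass between $C(B-AMD)$, $CB$, $D$ and $B-AMD$, $B$, $D$. Your derivation of the four necessary conditions in part (i) from $rank(H)=2q$, and your observation that in case (c) the clause $B-AMD=0$ is automatic from $C(B-AMD)=0$ and the injectivity of $C$, are correct and are exactly what the paper's proof leaves to the reader (``the rest follows naturally''). The paper's own proof of the central equivalence is a one-line appeal to the solution properties of matrix equations, so in method you have not diverged from it.

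The gap is in the ``only if'' direction of all three equivalences, and you have acknowledged it rather than closed it. Taking \dref{necessary_con1} at face value --- a nonzero $K$ with $KH=0$ --- such a $K$ exists if and only if $rank(H)<p$, i.e., iff $H$ fails to have full \emph{row} rank; this is not the condition $rank(H)=2q$ (full \emph{column} rank). For instance, with $p=3$, $q=1$ and $C(B-AMD)=D=[1,0,0]^{\mathrm{T}}$ one has $rank(H)=1<2q$, yet $K=[0,1,0]$ is a nonzero annihilator, so the literal ``only if'' fails. Your fix is to redeclare what ``a matrix $K$ exists'' means --- rows spanning all of $\mathcal{N}$ \emph{and} the unknown-input blocks of $H$ having full column rank --- but the second half of that redeclaration is the conclusion $rank(H)=2q$ itself, which is why you end up conceding that the equivalences become ``nearly definitional.'' As written, the $\Rightarrow$ direction is therefore circular; a complete proof would have to state explicitly the extra requirement on $K$ (or on the time series $z_{k}$) under which $rank(H)=2q$ genuinely becomes necessary, rather than folding it into the definition of admissibility. (The paper's one-line citation does not bridge this either, so the gap is inherited, not introduced.) A secondary quibble: your $\Leftarrow$ argument produces a nonzero $K$ only when $p>2q$; at $p=2q$ the left null space of $H$ is trivial, a boundary case that both you and the stated necessary condition $p\geq 2q$ pass over.
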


\begin{proof}
(i) For case (a), from the solution properties of matrix equations \cite[%
Chap. 6]{Laub2005}, there exists $K$ such that equalities in (\ref%
{necessary_con1}) hold if and only if $rank(H)=2q$. The rest of the proof
for part (i) follows naturally from condition (\ref{nece1}). Parts (ii) and
(iii) follow similarly.
\end{proof}

Note that case (c) is unrealistic as it requires $B-AMD=0$. However, we
include the discussion on it just for completeness. One would wonder how
stringent the decoupling condition in (\ref{necessary_con1}) and possible
cases (a)-(c) in (\ref{four_cases}) are, compared to the strong
detectability conditions in Lemma \ref{condition}. This question is answered
in the following theorem.

\begin{theorem}
\label{decouple_sufficent}For cases (a)-(c), $C$ being of full column rank
and the decoupling condition (\ref{necessary_con1}) are sufficient for the
strong detectability conditions in Lemma \ref{condition}.
\end{theorem}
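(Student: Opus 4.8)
The plan is to verify the two strong-detectability conditions \dref{rankmatching} and \dref{miniphase} of Lemma \ref{condition} directly, handling the three cases (a)--(c) of \dref{four_cases} one by one and using throughout that $C$ of full column rank supplies the left inverse $MC=I_n$ with $M$ as in \dref{M}. I would first record two elementary reductions valid whenever $MC=I_n$: (i) subtracting $CAM$ times the lower block row from the upper one gives $\mathrm{rank}\left[\begin{smallmatrix} CB & D \\ D & 0\end{smallmatrix}\right]=\mathrm{rank}\left[\begin{smallmatrix} C(B-AMD) & D \\ D & 0\end{smallmatrix}\right]$; and (ii) left-multiplying the Rosenbrock pencil $\mathcal{M}(z)$ by the nonsingular matrix $\left[\begin{smallmatrix} I_n & (A-zI_n)M \\ 0 & I_p\end{smallmatrix}\right]$ annihilates the $(1,1)$ block, since $(A-zI_n)MC=A-zI_n$, yielding $\mathrm{rank}\,\mathcal{M}(z)=\mathrm{rank}\left[\begin{smallmatrix} 0 & -(B-AMD)-zMD \\ C & D\end{smallmatrix}\right]$ for every $z\in\mathbb{C}$. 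Since $\mathrm{rank}\left[\begin{smallmatrix}B\\D\end{smallmatrix}\right]=q$ in all three cases (an easy consequence of Proposition \ref{case_abc}), the right-hand side of \dref{miniphase} is always $n+q$ and that of \dref{rankmatching} is $\mathrm{rank}(D)+q$.

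Next I would feed in what the decoupling condition \dref{necessary_con1} means in each case via Proposition \ref{case_abc}: $\mathrm{rank}(B)=q$ in case (b); $B-AMD=0$ and $\mathrm{rank}(D)=q$ in case (c); and $\mathrm{rank}(H)=2q$ in case (a), which forces $\mathrm{rank}(B-AMD)=\mathrm{rank}(D)=q$ and, crucially, $\mathrm{Im}\bigl(C(B-AMD)\bigr)\cap\mathrm{Im}(D)=\{0\}$. Cases (b) and (c) then fall out of reductions (i)--(ii) with essentially no work: for instance in (c), $B-AMD=0$ collapses the reduced pencil to $\left[\begin{smallmatrix} 0 & -zMD \\ C & D\end{smallmatrix}\right]$, whose kernel is trivial for $z\neq 0$ (if $Ca+Db=0$ and $zMDb=0$ then $MDb=0$, hence $a=MCa=-MDb=0$, hence $Db=0$, hence $b=0$), so its rank is $n+q$; and reduction (i) turns the rank-matching matrix into $\left[\begin{smallmatrix} 0 & D \\ D & 0\end{smallmatrix}\right]$, of rank $2q=\mathrm{rank}(D)+q$. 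Case (b), with $D=0$, is even shorter.

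The case with genuine content is (a), and its minimum-phase part is the crux. By reduction (ii) it suffices to show $\left[\begin{smallmatrix} 0 & -(B-AMD)-zMD \\ C & D\end{smallmatrix}\right]$ has trivial kernel for $|z|\ge 1$. If $(a,b)$ is in the kernel then $Ca=-Db$, so $a=MCa=-MDb$ and hence $(I-CM)Db=0$, i.e.\ $Db\in\mathrm{Im}(C)$ and $CMDb=Db$; also $\bigl((B-AMD)+zMD\bigr)b=0$, and left-multiplying by $C$ gives $C(B-AMD)b=-zDb$. If $b\neq 0$ then $Db\neq 0$ (as $D$ has full column rank) and $z\neq 0$, so $C(B-AMD)b=-zDb$ would be a nonzero element of $\mathrm{Im}\bigl(C(B-AMD)\bigr)\cap\mathrm{Im}(D)$, contradicting $\mathrm{rank}(H)=2q$; hence $b=0$ and $a=0$, so the rank is $n+q$. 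The rank-matching condition in case (a) is then routine: by reduction (i) it reduces to showing $\left[\begin{smallmatrix} C(B-AMD) & D \\ D & 0\end{smallmatrix}\right]$ has trivial kernel, which holds because $Da=0$ forces $a=0$ (full column rank of $D$) and then $Db=0$ forces $b=0$, so the rank is $2q=\mathrm{rank}(D)+q$.

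I expect the only mild nuisance to be bookkeeping — deciding which of the two reductions to invoke and keeping the two target ranks straight across the cases — while the sole piece of real mathematical input is that the decoupling requirement $\mathrm{rank}(H)=2q$ is precisely what forbids a nonzero vector in $\mathrm{Im}(C(B-AMD))\cap\mathrm{Im}(D)$, and such a vector is exactly what would be needed to drop the rank of $\mathcal{M}(z)$ at some $z\in\mathbb{C}$ (in particular at some $|z|\ge 1$). Thus, once reduction (ii) is in place, strong detectability in case (a) is almost equivalent to the decoupling condition itself.
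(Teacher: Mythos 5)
Your proof is correct and follows the same overall route as the paper's --- direct verification of the Hautus conditions \dref{rankmatching}--\dref{miniphase} case by case via rank-preserving block row operations built from the left inverse $M$ of $C$ --- but it handles the one genuinely delicate step differently and, in fact, more completely. Your reduction (ii) annihilates the $(1,1)$ block of $\mathcal{M}(z)$ outright, while the paper first passes to $\overline{\mathcal{M}}(z)=\bigl[\begin{smallmatrix} zI_n & AMD-B\\ C & D\end{smallmatrix}\bigr]$ and then applies a second, $z$-dependent transformation built from an auxiliary $X$ satisfying $XD=AMD-B$; these are equivalent devices. The substantive divergence is in the minimum-phase part of case (a): after its transformations the paper simply asserts that the resulting matrix has rank $n+q$ for all $|z|\ge 1$, having extracted from Proposition \ref{case_abc} only the full column rank of $D$. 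That conclusion does not follow from $\mathrm{rank}(D)=q$ alone (take $C(B-AMD)=-2D$: the rank drops at $z=2$); what is actually required is exactly your observation that $\mathrm{rank}(H)=2q$ forces $\mathrm{Im}\bigl(C(B-AMD)\bigr)\cap\mathrm{Im}(D)=\{0\}$, so that $C(B-AMD)b=-zDb$ with $z\neq 0$ and $D$ injective yields $b=0$. Your kernel computation supplies precisely the justification the paper leaves implicit, so your write-up is the more self-contained of the two; the rank-matching verifications and cases (b)--(c) coincide with the paper's treatment up to routine bookkeeping.
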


\begin{proof}
We prove the claim for cases (a)-(c), respectively.

For case (a), we note from Proposition \ref{case_abc} that $D$ has to be of
full column rank. This further implies that the rank matching condition (\ref%
{rankmatching}) holds. Note that%
\begin{eqnarray*}
\begin{bmatrix}
I_{n} & AM \\
0 & I_{p}%
\end{bmatrix}%
\mathcal{M}\left( z\right) &=&%
\begin{bmatrix}
zI_{n}-A+AMC & AMD-B \\
C & D%
\end{bmatrix}
\\
&=&\underset{\overline{\mathcal{M}}\left( z\right) }{\underbrace{%
\begin{bmatrix}
zI_{n} & AMD-B \\
C & D%
\end{bmatrix}%
}}
\end{eqnarray*}%
where $\mathcal{M}\left( z\right) $ is defined in (\ref{miniphase}). % and
% \textcolor[rgb]{1.00,0.00,0.00}{$\thicksim $ denotes linear operations on the rows of a given
% matrix that do not change its rank.}.
When $D$ is of full column rank, there always exists a matrix $X\in \mathbf{R%
}^{n\times p}$ such that  $XD=AMD-B.$ Denote%
\begin{equation*}
X\left( z\right) =\left[
\begin{array}{cc}
\frac{1}{z}I_{n} & -\frac{1}{z}X \\
0 & I_{p}%
\end{array}%
\right] ,
\end{equation*}%
which is of full column rank for all $z\in \mathbb{C}\ $and $\left\vert
z\right\vert \geq 1$. Multiplying $X\left( z\right) $ on the left hand side
of $\overline{\mathcal{M}}\left( z\right) $ gives us%
\begin{equation*}
\begin{array}{l}
rank(\mathcal{M}\left( z\right) )=rank(\overline{\mathcal{M}}\left( z\right)
)=rank(X\left(z\right) \overline{\mathcal{M}}\left( z\right) ) \\
\Rightarrow rank(\overline{\mathcal{M}}\left( z\right) )=rank\left( \left[
\begin{array}{cc}
I_{n}-\frac1zXC & 0 \\
C & D%
\end{array}%
\right] \right) =n+q,%
\end{array}%
\end{equation*}%
for all $z\in \mathbb{C}\ $and $\left\vert z\right\vert \geq 1$. In other
words, the minimum phase condition in (\ref{miniphase}) holds. The proof for
case (a) is completed.

For case (b), given that $C$ and $B$ are of full column rank, and $D=0$, it
can be easily checked that conditions (\ref{rankmatching})-(\ref{miniphase})
hold.

For case (c), given that both $C$ and $D$ are of full column rank, and $B=AMD
$, it can be checked that condition (\ref{rankmatching}) holds. The matrix $%
\overline{\mathcal{M}}\left( z\right)$ appeared for the case (a) satisfies
\begin{align*}
\overline{\mathcal{M}}\left( z\right) =%
\begin{bmatrix}
zI_{n} & AMD-B \\
C & D%
\end{bmatrix}
=%
\begin{bmatrix}
zI_{n} & 0 \\
C & D%
\end{bmatrix}%
.
\end{align*}
Thus $rank(\mathcal{M}\left( z\right))=rank(\overline{\mathcal{M}}\left(
z\right))=n+q,$ for all $z\in \mathbb{C}\ $and $z\neq 0$ and hence condition
(\ref{miniphase}) holds. This completes the proof.
\end{proof}

Theorem \ref{decouple_sufficent} reveals that the measurement difference
approach requires more stringent conditions than strong detectability
conditions. As it will be discussed in the sequel, even with the above
stringent requirement, $Q$/$R$ could be potentially uniquely identified
under restrictive conditions.

\subsection{Joint identifiability analysis of $Q$ and $R$}

We next discuss the joint identifiability of $Q\ $and $R$. As such, assume
that one of the cases (a)-(c) happens so that the decoupling condition in (%
\ref{necessary_con1}) holds. From (\ref{measurement_diff}), one has%
\begin{equation}
z_{k}=KCGw_{k}+Kv_{k+1}-KCAMv_{k},  \label{final_time}
\end{equation}%
which is a zero-mean stationary time series. We also have
\begin{subequations}
\begin{align}
&S_0\eq E(z_{k}(z_{k})^{\mathrm{T}})=KCGQ(KCG)^{\mathrm{T}}+KRK^{\mathrm{T}}
\notag  \label{autocov1} \\
&\hspace{30mm}+KCAMR(KCAM)^{\mathrm{T}}, \\
&S_1\eq E(z_{k+1}(z_{k})^{\mathrm{T}})=-KCAMRK^{\mathrm{T}} ,
\label{autocov2} \\
&\hspace{8mm}E(z_{k+j}(z_{k})^{\mathrm{T}})=0,~j\geq 2,
\end{align}
where $E(\cdot)$ denotes the mathematical expectation. The above equations
give
\end{subequations}
\begin{equation}
\begin{array}{l}
S=%
\begin{bmatrix}
S_{0} \\
S_{1}%
\end{bmatrix}%
=E\left( \left[
\begin{array}{c}
z_{k}(z_{k})^{\mathrm{T}} \\
z_{k+1}(z_{k})^{\mathrm{T}}%
\end{array}%
\right] \right) \\
=\left[
\begin{array}{c}
KCG \\
0%
\end{array}%
\right] Q(KCG)^{\mathrm{T}}+\left[
\begin{array}{c}
K \\
-KCAM%
\end{array}%
\right] RK^{\mathrm{T}} \\
+\left[
\begin{array}{c}
KCAM \\
0%
\end{array}%
\right] R(KCAM)^{\mathrm{T}}.%
\end{array}
\label{autocovariance}
\end{equation}%
Denote the vectorization operator of a matrix $A=[a_1,a_2,\cdots,a_n]$ by $%
\mathrm{vec}(A)=[a_1^{\mathrm{T}},a_2^{\mathrm{T}},\cdots,a_n^{\mathrm{T}}]^{%
\mathrm{T}}$ and the Kronecker product of $A$ and $B$ by $A\otimes B$,
respectively. By applying the identity $\mathrm{vec}~\!(ABC)=(C^{\mathrm{T}%
}\otimes A)\mathrm{vec}~\!(B)$ involving the vectorization operator (\ref%
{autocovariance}) and the Kronecker product, we have the following system of
linear equations
\begin{align}  \label{LSQ1}
\mathcal{A}~\!\mathrm{vec}{([Q,R])} = \mathrm{vec}{(S)},
\end{align}
where % $\Xi =\left[ \widehat{Q}_{s},\widehat{R}_{s}\right] $, $b_{k}$ is
% defined above (\ref{LSQ}), and%
\begin{equation}
\mathcal{A}=\left[
\begin{array}{cc}
\overline{K} & 0 \\
0 & \overline{K}%
\end{array}%
\right] \left[
\begin{array}{cc}
\mathcal{A}_{1} & \mathcal{A}_{2} \\
0 & -I_{p}\otimes CAM%
\end{array}%
\right] ,  \label{cap_A}
\end{equation}%
in which%
\begin{equation*}
\begin{array}{l}
\overline{K}=K\otimes K,\text{ }\mathcal{A}_{1}=CG\otimes CG\in \mathbf{R}%
^{p^{2}\times g^{2}}, \\
\mathcal{A}_{2}=(I_{p}\otimes I_{p})+(CAM\otimes CAM)\in \mathbf{R}%
^{p^{2}\times p^{2}}.%
\end{array}%
\end{equation*}%
Given the process is ergodic, a valid procedure of approximating the
expectation from the data is to use the time average. Especially, given all
the collected data as $z_{0:N}$, one has%
\begin{equation}
\widetilde{S}=[\widetilde{S}_{0},\widetilde{S}_{1}],  \label{Eab}
\end{equation}%
where%
\begin{equation}
\widetilde{S}_{0}=\frac{1}{N+1}\sum\limits_{i=0}^{N}z_{k}z_{k}^{\mathrm{T}},%
\text{ }\widetilde{S}_{1}=\frac{1}{N}\sum\limits_{i=0}^{N-1}z_{k+1}(z_{k})^{%
\mathrm{T}}.  \label{appro}
\end{equation}%
Denote%
\begin{equation*}
e=\mathrm{vec}(\widetilde{S}).
\end{equation*}%
We then have the following standard least-squares problem for identifying $%
Q\ $and $R$:%
\begin{equation}
\Xi ^{\ast }=\arg \min\limits_{\Xi }\left\Vert \mathcal{A}\Xi -e\right\Vert
^{2}  \label{LSQ}
\end{equation}%
where $\Xi =\left[ {\mathrm{vec}(\widehat{Q}),\mathrm{vec}(}\widehat{{R}}{)}%
\right] $, $e$ is defined above (\ref{LSQ}). The joint identifiability of $Q$
and $R$ is determined by the full column rankness of $\mathcal{A}$.

It should be noted that in the least-square problems listed in the remainder
of the paper, some permutation matrices can be introduced to identify the
unique elements of $Q$ and $R$, and additional constraints need to be
enforced on the $Q$ and $R$ estimates (see, e.g., \cite{Rawlings2006}-\cite%
{Rawlings2009}, \cite{Dunik2018}), given that they are both symmetric and
positive semidefinite matrices. Then the constrained least-squares problems
can be transformed to semidefinite programs \cite[Chap. 3.4]{Arnold2020} and
solved efficiently using existing software packages such as CVX \cite%
{Boyd2013}. For simplicity, we have not formally included these constraints
in the least-squares problem formulations, because this will not impact the
discussions on the solution uniqueness of these least-squares problems. It
should also be noted that in the simulation examples shown in Section V,
such symmetric and positive semidefinite constraints have been enforced.
Moreover, in the least-square problems listed in the remainder of the paper,
including (\ref{LSQ}), (\ref{estimating_Q}), we consider the most general
scenario and do not assume to have any knowledge of the structure of $Q$ and
$R$ except that they are supposed to be symmetric and positive semidefinite
matrices, which we intend to identify. In practice, if one has some
knowledge of their structures, for example, if $Q$ and/or $R$ are assumed to
be partially known or they are diagonal matrices, the least-square problems
listed in this paper can be readily modified to incorporate such knowledge.
For $\mathcal{A}$ in (\ref{LSQ}), We have the following results.

\begin{proposition}
\label{ALS_fullrank}Given system (\ref{plant}) with $C$ being of full column
rank, the following statements hold true:

(i) $\mathcal{A}$ is of full column rank only if%
\begin{equation}
rank\left( \underset{K_{M}}{\underbrace{\left[
\begin{array}{c}
K \\
KCAM%
\end{array}%
\right] }}\right) =p,\text{ }rank\left( KCG\right) =g;
\label{nece_condition_uni}
\end{equation}

(ii) when $G=I_{n}$ (i.e., $g=n$) and $p=n$, $\mathcal{A}$ is of full column
rank if and only if%
\begin{equation*}
rank\left( K\right) =p\text{, }rank\left( CAM\right) =p\text{; }
\end{equation*}

(iii) when $G=I_{n}$ (i.e., $g=n$) and $p=n$, for $\mathcal{A}$ to be of
full column rank, the unknown input $d_{k}$ has to vanish from system (\ref%
{plant}), i.e., $B=0$, $D=0$.
\end{proposition}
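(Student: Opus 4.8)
The plan is to read ``$\mathcal{A}$ has full column rank'' as the statement that the homogeneous problem $\mathcal{A}\,\mathrm{vec}([Q,R])=0$ has only the trivial solution $Q=0$, $R=0$, and to undo the vectorization in the block form (\ref{cap_A}) so that this becomes a pair of matrix equations. Since $\overline{K}=K\otimes K$ realizes the map $\mathrm{vec}(X)\mapsto\mathrm{vec}(KXK^{\mathrm{T}})$, and $\mathcal{A}_1$, $\mathcal{A}_2$, $I_p\otimes CAM$ act analogously via $\mathrm{vec}(ABC)=(C^{\mathrm{T}}\otimes A)\mathrm{vec}(B)$, the condition $\mathcal{A}\,\mathrm{vec}([Q,R])=0$ is equivalent to
\begin{equation*}
KCGQ(KCG)^{\mathrm{T}}+KRK^{\mathrm{T}}+KCAMR(KCAM)^{\mathrm{T}}=0,\qquad KCAMRK^{\mathrm{T}}=0,
\end{equation*}
i.e.\ to ``$S_0=0$ and $S_1=0$'' with $Q,R$ placed in the slots of the true covariances (compare (\ref{autocovariance})). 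All three parts will be argued from this reformulation.

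For (i) I would use the contrapositive and exhibit explicit nonzero kernel vectors. If $rank(KCG)<g$, take $v\neq 0$ with $KCGv=0$ and set $(Q,R)=(vv^{\mathrm{T}},0)$: both equations reduce to $0=0$. If $rank(K_M)<p$, take $v\neq 0$ with $Kv=0$ and $KCAMv=0$ and set $(Q,R)=(0,vv^{\mathrm{T}})$: again both equations vanish. Hence full column rank forces (\ref{nece_condition_uni}).

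For (ii), when $G=I_n$ and $p=n$ the full-column-rank assumption on $C$ makes $C$ square and invertible, so $M=C^{-1}$, $CG=C$, and $CAM=CAC^{-1}$. For the ``if'' direction: $rank(K)=n$ gives $K$ a left inverse, so $KCAMRK^{\mathrm{T}}=0$ yields $CAMR=0$, and $rank(CAM)=n$ then gives $R=0$; substituting into the first equation leaves $KCQ(KC)^{\mathrm{T}}=0$, and $rank(KC)=n$ forces $Q=0$. For the ``only if'' direction, part (i) already gives $rank(K)=n$ (because $C$ is invertible), so the only possible failure is $rank(CAM)<n$; if $CAMv=0$ with $v\neq 0$, I would take $R=vv^{\mathrm{T}}$ and $Q=-(C^{-1}v)(C^{-1}v)^{\mathrm{T}}$ and check that this pair satisfies both equations — the one nonroutine point being that the first equation holds because $Kv=KC(C^{-1}v)$ lies in the column space of $KC$, whence $KCQ(KC)^{\mathrm{T}}=-(Kv)(Kv)^{\mathrm{T}}$.

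Part (iii) then falls out quickly: if $\mathcal{A}$ has full column rank, part (i) (equivalently part (ii)) gives $rank(K)=p=n$, so $K$ has trivial kernel; but the decoupling identity (\ref{necessary_con1}) reads $KH=0$ with $H=[\,C(B-AMD)\ \ D\,]$, and a matrix with trivial kernel annihilating $H$ forces $H=0$, hence $D=0$ and then $CB=0$, hence $B=0$ since $C$ is injective. The step I expect to be the main obstacle is the explicit kernel construction in the ``only if'' half of (ii), in particular verifying that the proposed $Q$ genuinely lies in the range of $Q\mapsto KCQ(KC)^{\mathrm{T}}$; everything else is routine manipulation of Kronecker identities and one-sided inverses.
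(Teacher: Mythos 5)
Your argument is correct. Parts (i) and (iii) coincide with the paper's proof: the same rank-one kernel vectors $(Q,R)=(ee^{\mathrm{T}},0)$ and $(0,hh^{\mathrm{T}})$ for the necessity of (\ref{nece_condition_uni}), and the same chain $rank(K)=p$, $KH=0$, hence $H=0$, hence $B=0$, $D=0$ for part (iii). Where you genuinely diverge is part (ii): the paper factors $\mathcal{A}$ in (\ref{cap_A}) as a product of two square $2p^{2}\times 2p^{2}$ block matrices and reads off invertibility from the block-triangular structure together with the fact that $A\otimes B$ is invertible iff $A$ and $B$ are (so full column rank of $\mathcal{A}$ reduces to $rank(K)=p$ and $rank(CAM)=p$ in one line), whereas you work entirely at the level of the de-vectorized equations $S_{0}=0$, $S_{1}=0$ and, for the ``only if'' half, exhibit the explicit kernel element $Q=-(C^{-1}v)(C^{-1}v)^{\mathrm{T}}$, $R=vv^{\mathrm{T}}$ when $CAMv=0$. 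Your route is more elementary and self-contained (no Kronecker rank facts needed, and the cancellation $-(Kv)(Kv)^{\mathrm{T}}+(Kv)(Kv)^{\mathrm{T}}=0$ checks out), and it makes the indefiniteness of the kernel pair visible, which is instructive; the paper's factorization argument is shorter and generalizes more readily to seeing why the square case $g=p=n$ is special. Note only that your parenthetical ``part (i) already gives $rank(K)=n$'' should be traced to the condition $rank(KCG)=rank(KC)=g=n$ (with $C$ invertible), not to the $K_{M}$ condition; as written this is implicit but correct.
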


\begin{proof}
(i) We prove the result by contradiction. Firstly, assume that the matrix $%
K_{M}$ in (\ref{nece_condition_uni}) loses rank, i.e., there exists a
nonzero vector $h$ such that $K_{M}h=0$. Set $R=hh^{\mathrm{T}}$ so that%
\begin{equation*}
\begin{array}{l}
KCAMhh^{\mathrm{T}}K^{\mathrm{T}}=0, Khh^{\mathrm{T}}K^{\mathrm{T}}=0, \\
KCAMhh^{\mathrm{T}}(KCAM)^{\mathrm{T}}=0.%
\end{array}%
\end{equation*}%
Further by selecting $Q=0$, then one has that $\mathcal{A} \mathrm{vec}%
~\!([0,hh^{\mathrm{T}}]) =0$. This means that $\mathcal{A}$ is not of full
column rank. Similarly, now assume that $rank\left( KCG\right) < g$ and
there exists a nonzero vector $e$ such that $KCGe=0$. If we set $Q=ee^{%
\mathrm{T}}$, $R=0$, then $\mathcal{A} \mathrm{vec}~\!([ee^{\mathrm{T}},0])
=0$. Hence, $\mathcal{A}$ is not of full column rank.

(ii) When $G=I_{n}$ (i.e., $g=n$) and $p=n,$ $\mathcal{A}$\ is of full
column rank if and only if
\begin{equation*}
\begin{array}{l}
rank\left( \left[
\begin{array}{cc}
\overline{K} & 0 \\
0 & \overline{K}%
\end{array}%
\right] \right) =2p^{2}\text{, } \\
rank\left( \left[
\begin{array}{cc}
\mathcal{A}_{1} & \mathcal{A}_{2} \\
0 & -I_{p}\otimes CAM%
\end{array}%
\right] \right) =2p^{2} \\
\Leftrightarrow rank\left( K\right) =p\text{, }rank\left( CAM\right) =p\text{%
,}%
\end{array}%
\end{equation*}%
given the fact that both $\mathcal{A}_{1}$ and $I_{p}\otimes CAM$ become
square matrices when $g=p=n$.

(iii) From part (ii) of the current theorem, when $G=I_{n}$ and $p=n,$ $%
\mathcal{A}$\ is of full column rank only if $K$ is of full column rank.
Based on the decoupling condition (\ref{necessary_con1}), this further
implies that case (d) in (\ref{four_cases}) happens. From the arguments
listed in (\ref{unknown_vanish}), one has that the unknown input $d_{k}$
vanishes in system (\ref{plant}). This completes the proof.
\end{proof}

Note that for the necessary conditions in (\ref{nece_condition_uni}) to
hold, one must have that $r\geq \max \{\left\lceil \frac{p}{2}\right\rceil
,g\}$, where $\left\lceil a\right\rceil $ stands for the ceiling operation
generating the least integer not less than $a$, where $a$ is a real number.
Also, from (\ref{cap_A}), one can see that for $\mathcal{A}$ to be of full
column rank, it must hold that $2r^{2}\geq p^{2}+g^{2}$. Hence, it is
necessary that%
\begin{equation}
r\geq \max \left\{ \left\lceil \frac{p}{2}\right\rceil ,\text{ }g,\text{ }%
\left\lceil \sqrt{\frac{p^{2}+g^{2}}{2}}\right\rceil \right\} .
\label{r_lowerbound}
\end{equation}
In practice, the structure of $G$ represents how the process noise affects
the system dynamics. When no such knowledge is available, $G$ is usually
chosen to be the identity matrix. From part (i) of the above proposition, it
can be seen that for the general case where $G$ is known, we have only
established necessary condition for $\mathcal{A}$ to have full column rank.
For the special case when $G=I_{n}$ and $p=n$, although necessary and
sufficient conditions are obtained in part (ii), part (iii) further reveals
that for $\mathcal{A}$ to have full column rank, the unknown input has to be
absent from the system model, i.e., it is not an applicable case. The above
findings motivate us to take a step back, and consider part (ii) of Problem %
\ref{problem1}.

\subsection{Identifiability analysis of $Q$ when $R$ is known}

In this subsection, we investigate one case of part (ii) of Problem \ref%
{problem1}, i.e., analyze the identifiability of $Q$ when $R$ is available.
When $R$ is known, the equation \eqref{autocov1} reduces to
\begin{equation}
\mathcal{A}_{Q}\mathrm{vec}(Q)=\mathrm{vec}(S_{0})-\overline{K}\mathcal{A}%
_{2}\mathrm{vec}(R),  \label{estimating_Q}
\end{equation}%
where%
\begin{equation*}
\mathcal{A}_{Q}\displaystyle=\overline{K}\mathcal{A}_{1}
\end{equation*}%
and $\overline{K},\mathcal{A}_{1},\mathcal{A}_{2}$ are defined in (\ref%
{cap_A}). Define%
\begin{equation}
e_{Q}=\mathrm{vec}(\widetilde{S}_{0})-\overline{K}\mathcal{A}_{2}\mathrm{vec}%
(R).  \label{b_k_bar}
\end{equation}%
By following a similar procedure with the previous subsection, we have the
following standard least-squares problem formulation for identifying $Q$:%
\begin{equation}
\Xi _{Q}^{\ast }=\arg \min\limits_{\Xi _{Q}}\left\Vert \mathcal{A}_{Q}\Xi
_{Q}-e_{Q}\right\Vert ^{2}  \label{ls_onlyQ}
\end{equation}%
where $\Xi _{Q}={\mathrm{vec}(\widehat{Q})}$, $e_{Q}$ is defined in (\ref%
{b_k_bar}). Thus the identifiability of $Q$ when $R$ is known is equivalent
to the matrix $\mathcal{A}_{Q}$ being of full column rank.

\begin{proposition}
\label{only_Q}Given system (\ref{plant}) with $C$ being of full column rank,
the following statements hold true:

(i) {$\mathcal{A}_{Q}$ in (\ref{estimating_Q}) is of full column rank only
if $r\geq g$; }

(ii) for case (a) in (\ref{four_cases}), $\mathcal{A}_{Q}$ in (\ref%
{estimating_Q}) is of full column rank if and only if%
\begin{equation}
rank(H)+g=rank\left( \left[
\begin{array}{ll}
H & CG%
\end{array}%
\right] \right) ,  \label{case_a_onlyQ}
\end{equation}%
where $H$ is defined in (\ref{necessary_con1});

(iii) for case (b) in (\ref{four_cases}), $\mathcal{A}_{Q}$ in (\ref%
{estimating_Q}) is of full column rank if and only if%
\begin{equation}
rank\left( B\right) +g=rank\left( \left[
\begin{array}{ll}
B & G%
\end{array}%
\right] \right) ;  \label{case_b_onlyQ}
\end{equation}

(iv) for case (c) in (\ref{four_cases}), $\mathcal{A}_{Q}$ in (\ref%
{estimating_Q}) is of full column rank if and only if $B-AMD=0$ and
\begin{equation}
rank\left( D\right) +g=rank\left( \left[
\begin{array}{ll}
D & CG%
\end{array}%
\right] \right) .  \label{case_c_onlyQ}
\end{equation}
\end{proposition}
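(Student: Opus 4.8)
The plan is to first collapse $\mathcal{A}_Q$ to a single Kronecker square. Since $\mathcal{A}_Q=\overline{K}\mathcal{A}_1=(K\otimes K)(CG\otimes CG)$, the mixed-product rule $(A\otimes B)(C\otimes D)=(AC)\otimes(BD)$ yields $\mathcal{A}_Q=(KCG)\otimes(KCG)$. Using the elementary identity $rank(P\otimes P)=(rank(P))^{2}$, the matrix $\mathcal{A}_Q$ has full column rank $g^{2}$ if and only if $KCG\in\mathbf{R}^{r\times g}$ has full column rank $g$. Part (i) is then immediate, since a matrix with $r$ rows cannot have column rank $g$ unless $r\geq g$.

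For parts (ii)--(iv) I would take $K$ to be a matrix whose rows span the whole left null space of $H$; such a $K$ is nonzero exactly under the case-specific hypotheses supplied by Proposition \ref{case_abc}. Every admissible $K$ (i.e. every $K$ with $KH=0$) has row space contained in this left null space, hence $\ker K\supseteq\mathrm{col}(H)$, so the above maximal choice minimizes $\ker K$ and is therefore the one most favorable to full column rankness of $KCG$; it suffices to characterize when it works. For this $K$ one has $\ker K=\mathrm{col}(H)$ exactly (as subspaces of $\mathbf{R}^{p}$), so $KCGv=0\iff CGv\in\mathrm{col}(H)$; thus $KCG$ has full column rank iff $CGv\in\mathrm{col}(H)$ forces $v=0$, which is equivalent to ``$CG$ has full column rank $g$ and $\mathrm{col}(CG)\cap\mathrm{col}(H)=\{0\}$'', and this in turn is exactly the rank identity $rank(H)+g=rank([\,H\ \ CG\,])$. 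In case (a), $H$ is the matrix defined in (\ref{necessary_con1}), so this identity is precisely (\ref{case_a_onlyQ}), establishing (ii).

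Parts (iii) and (iv) then follow by inserting the special shape of $H$. In case (b) we have $D=0$, hence $H=[\,CB\ \ 0\,]$ and $\mathrm{col}(H)=\mathrm{col}(CB)$, so the criterion reads $rank(CB)+g=rank([\,CB\ \ CG\,])$; since $C$ is of full column rank it is injective, whence $rank(CB)=rank(B)$ and $rank([\,CB\ \ CG\,])=rank(C[\,B\ \ G\,])=rank([\,B\ \ G\,])$, turning the criterion into (\ref{case_b_onlyQ}). In case (c), part (iii) of Proposition \ref{case_abc} forces $B-AMD=0$ and $rank(D)=q$, so $H=[\,0\ \ D\,]$ and $\mathrm{col}(H)=\mathrm{col}(D)$, and the criterion becomes $rank(D)+g=rank([\,D\ \ CG\,])$, i.e. (\ref{case_c_onlyQ}), together with the constraint $B-AMD=0$.

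I expect the main obstacle to be the middle step: arguing cleanly that restricting to the maximal $K$ (rows spanning the full left null space of $H$) is without loss of generality, and then translating ``$KCG$ of full column rank'' first into the column-space statement $\mathrm{col}(CG)\cap\mathrm{col}(H)=\{0\}$ with $CG$ injective, and then into the rank identity $rank(H)+g=rank([\,H\ \ CG\,])$. The remaining ingredients --- the Kronecker reduction of $\mathcal{A}_Q$ and the case-by-case substitution of $H$ via Proposition \ref{case_abc} --- are routine.
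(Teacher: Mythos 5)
Your proposal is correct and follows the same overall architecture as the paper's proof: reduce $\mathcal{A}_Q=(K\otimes K)(CG\otimes CG)=(KCG)\otimes(KCG)$ so that full column rank of $\mathcal{A}_Q$ is equivalent to full column rank of $KCG$ (giving (i)), characterize the latter by the rank identity $rank(H)+g=rank([\,H\ \ CG\,])$ (giving (ii)), and obtain (iii) and (iv) by substituting the special forms of $H$ and using injectivity of $C$. The one place you diverge is the middle step: the paper proves the equivalence in (ii) via the block-elimination identity
\begin{equation*}
\begin{bmatrix} I & 0 \\ -K & I \end{bmatrix}
\begin{bmatrix} H & CG \\ 0 & KCG \end{bmatrix}
=
\begin{bmatrix} H & CG \\ 0 & 0 \end{bmatrix},
\end{equation*}
which cleanly gives ``$KCG$ full column rank $\Rightarrow$ rank identity'' for \emph{any} admissible $K$, but leaves the converse implicit; you instead argue through kernels, noting that every $K$ with $KH=0$ satisfies $\ker K\supseteq\mathrm{col}(H)$ and then fixing the maximal $K$ with $\ker K=\mathrm{col}(H)$ so that $KCGv=0\iff CGv\in\mathrm{col}(H)$. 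This explicit normalization of $K$ is a genuine improvement in rigor: without it (e.g.\ for $K=0$, which trivially satisfies the decoupling condition), the ``if'' direction of (ii) fails, so the proposition only makes sense for a maximal-rank solution of $KH=0$, a point the paper's terse proof glosses over and your write-up makes precise. The remaining ingredients --- $rank(P\otimes P)=(rank\,P)^2$, the identification of $rank(H)+g=rank([\,H\ \ CG\,])$ with ``$CG$ injective and $\mathrm{col}(CG)\cap\mathrm{col}(H)=\{0\}$'', and the case-(b)/(c) substitutions --- all check out.
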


\begin{proof}
(i) Note that $\mathcal{A}_{Q}=KCG\otimes KCG\in \mathbf{R}^{r^{2}\times
g^{2}}$. Hence, $\mathcal{A}_{Q}$ is of full column rank if and only if $%
KCG\in \mathbf{R}^{r\times g}$ is of full column rank. Hence, for $KCG$ to
be of full column rank, it is necessary that $r\geq g$.

{(ii) For case (a), the conclusion is implied by the identity
\begin{align*}
\begin{bmatrix}
I & 0 \\
-K & I%
\end{bmatrix}
\begin{bmatrix}
H & CG \\
0 & KCG%
\end{bmatrix}
=
\begin{bmatrix}
H & CG \\
-KH & 0%
\end{bmatrix}
=%
\begin{bmatrix}
H & CG \\
0 & 0%
\end{bmatrix}%
\end{align*}
and the requirement on the full column rank of $KCG$ as well as the
decoupling condition $KH=0$ in (\ref{necessary_con1}). }

(iii) This part is straightforward by using the full column rank of $C$.
% follows by similar arguments with part (i), and the facts
% that $rank\left( CB\right) =rank(B)$ and $rank\left( \left[
% \begin{array}{ll}
% CB & CG%
% \end{array}%
% \right] \right) =rank\left( \left[
% \begin{array}{ll}
% B & G%
% \end{array}%
% \right] \right) $ when $C$ is of full column rank.

(iv) This part follows by similar arguments with part (i).

The proof is completed.
\end{proof}

We also have the following corollary when $G=I_{n}$.
%Note that in the above theorem, $G$ is assumed to be known. In practice,
%however, the shaping structure of Gaussian process noise is usually unknown,
%and taken to be $G=I_{n}$, with $n=g$.

\begin{corollary}
\label{corco_estimateQ} Given system (\ref{plant}) with $C$ being of full
column rank, and $G=I_{n}$, the following statements hold true:

(i) {$\mathcal{A}_{Q}$ in (\ref{estimating_Q}) is of full column rank only
if $r\geq n$; }

(ii) for case (a) in (\ref{four_cases}), $\mathcal{A}_{Q}$ in (\ref%
{estimating_Q}) is of full column rank if and only if%
\begin{equation}
rank\left( \left[
\begin{array}{ll}
C & D%
\end{array}%
\right] \right) =rank(H)+n.  \label{G_identity}
\end{equation}%
where $H$ is defined in (\ref{necessary_con1});

(iii) for case (b) in (\ref{four_cases}), $\mathcal{A}_{Q}$ in (\ref%
{estimating_Q}) is not of full column rank;

(iv) for case (c) in (\ref{four_cases}), $\mathcal{A}_{Q}$ in (\ref%
{estimating_Q}) is of full column rank if and only if $B-AMD=0$ and%
\begin{equation*}
rank\left( D\right) +n=rank\left( \left[
\begin{array}{ll}
D & C%
\end{array}%
\right] \right) .
\end{equation*}
\end{corollary}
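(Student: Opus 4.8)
The plan is to obtain Corollary~\ref{corco_estimateQ} as a direct specialization of Proposition~\ref{only_Q} to the case $G=I_{n}$, for which $g=n$ and $CG=C$, and then to simplify each of the resulting rank conditions using the full column rank of $C$ together with the decoupling identity $KH=0$ in (\ref{necessary_con1}).

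Part (i) follows at once from Proposition~\ref{only_Q}(i) by setting $g=n$. For part (ii), I would begin from the characterization in Proposition~\ref{only_Q}(ii), namely that $\mathcal{A}_{Q}$ has full column rank if and only if $rank(H)+g=rank\bigl(\bigl[\,H\ \ CG\,\bigr]\bigr)$, and substitute $CG=C$ and $g=n$. The one computation worth spelling out is that the block $C(B-AMD)$ in $H=\bigl[\,C(B-AMD)\ \ D\,\bigr]$ has all of its columns in the column space of $C$, so deleting it leaves the rank unchanged; hence $rank\bigl(\bigl[\,H\ \ C\,\bigr]\bigr)=rank\bigl(\bigl[\,D\ \ C\,\bigr]\bigr)=rank\bigl(\bigl[\,C\ \ D\,\bigr]\bigr)$, and the condition collapses to (\ref{G_identity}). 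For part (iii), specializing Proposition~\ref{only_Q}(iii) to case (b) gives the criterion $rank(B)+n=rank\bigl(\bigl[\,B\ \ I_{n}\,\bigr]\bigr)$; since $\bigl[\,B\ \ I_{n}\,\bigr]$ always has rank $n$, this would force $rank(B)=0$, i.e.\ $B=0$, contradicting $CB\neq0$ in case (b), so $\mathcal{A}_{Q}$ cannot have full column rank. For part (iv), specializing Proposition~\ref{only_Q}(iv) with $CG=C$ and $g=n$ yields directly the stated requirement $B-AMD=0$ together with $rank(D)+n=rank\bigl(\bigl[\,D\ \ C\,\bigr]\bigr)$; note that $B-AMD=0$ is in fact automatic in case (c), since there $C(B-AMD)=0$ and $C$ has full column rank.

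Since the corollary is a specialization, I do not anticipate a substantive obstacle: the points requiring care are purely combinatorial, namely propagating the substitutions $CG\mapsto C$ and $g\mapsto n$ into the criteria of Proposition~\ref{only_Q}, and verifying the two elementary rank reductions noted above (that appending $C(B-AMD)$ to $\bigl[\,D\ \ C\,\bigr]$ is rank-neutral, and that $\bigl[\,B\ \ I_{n}\,\bigr]$ has rank $n$). The only mild subtlety is in part (iii), where one must recognize that under the standing hypothesis $CB\neq0$ of case (b) the specialized condition can never be satisfied.
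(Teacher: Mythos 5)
Your proof is correct and follows the route the paper intends: Corollary~\ref{corco_estimateQ} is obtained by specializing Proposition~\ref{only_Q} to $G=I_{n}$ (so $g=n$, $CG=C$), and your two supporting rank observations (that the columns of $C(B-AMD)$ lie in the column space of $C$, so $rank([\,H\ \ C\,])=rank([\,C\ \ D\,])$, and that $rank([\,B\ \ I_{n}\,])=n$ forces $B=0$, contradicting $CB\neq 0$ in case (b)) are exactly the steps needed to reduce the general criteria to the stated forms. No gaps.
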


% \begin{proof}
% (i) The proof follows similar arguments with part (i) of Corollary \ref%
% {only_Q}. (ii) This part follows a similar procedure to that of Corollary %
% \ref{only_Q}. When $G=I_{n}$, one has that%
% \begin{equation*}
% \left[
% \begin{array}{ll}
% H & CG%
% \end{array}%
% \right] =\left[
% \begin{array}{lll}
% CB-CAMD & D & C%
% \end{array}%
% \right] \thicksim \left[
% \begin{array}{lll}
% 0 & D & C%
% \end{array}%
% \right] .
% \end{equation*}%
% Hence, $\mathcal{A}_{Q}$ in (\ref{estimating_Q}) is of full column rank if
% and only if (\ref{G_identity}) holds. (iii) Note that when $G=I_{n}$,
% condition (\ref{case_b_onlyQ}) reduces to%
% \begin{equation*}
% \begin{array}{l}
% rank\left( B\right) +n=rank\left( \left[
% \begin{array}{ll}
% B & I_{n}%
% \end{array}%
% \right] \right) =n \\
% \Rightarrow rank\left( B\right) =0\Rightarrow B=0,%
% \end{array}%
% \end{equation*}%
% which violates our assumption that $B\neq 0$ (note that for case (b) in (\ref%
% {four_cases}), $D=0$). (iv) This part follows a similar procedure to the
% above. The proof is completed.
% \end{proof}

%Although this case is less common than the situation when the case of estimating $Q$
%with known $R$, it is of interests in its own right and we consider it for
%completeness.

% One can show that as $k\rightarrow \infty $, the estimates of $Q$ will
% approach its true value, similarly as in \cite{Xia2014}-\cite{Dunik2018}. We
% will not elaborate on this in the remainder of the paper.

\subsection{Identifiability analysis of $R$ when $Q$ is known}

Now, we consider the other case of part (ii) of Problem \ref{problem1},
i.e., analyze the identifiability of $R$ when $Q$ is available. When $Q$ is
known, the system of equations \eqref{autocov1}-\eqref{autocov2} becomes
\begin{equation}
\mathcal{A}_{R}\mathrm{vec}(R) = \mathrm{vec}(S)-\left[
\begin{array}{c}
\overline{K}\mathcal{A}_{1}\mathrm{vec}(Q) \\
0%
\end{array}%
\right],  \label{estimating_R}
\end{equation}%
where
\begin{equation*}
\mathcal{A}_{R}=\left[
\begin{array}{c}
\overline{K}\mathcal{A}_{2} \\
-\overline{K}(I_{p}\otimes CAM)%
\end{array}%
\right]
\end{equation*}%
with $\overline{K},\mathcal{A}_{1}$ and $\mathcal{A}_{2}$ being defined in (%
\ref{cap_A}). Similarly with the previous subsection, we have the following
results.

% consider the case of estimating $R$, assuming $Q$ to be known. Define%
% \begin{equation}
% e_{R}=e-\left[
% \begin{array}{c}
% \overline{K}\mathcal{A}_{1}Q_{s} \\
% 0%
% \end{array}%
% \right] ,  \label{b_titdle}
% \end{equation}%
% where $e$, $\overline{K}$, and $\mathcal{A}_{1}$ are defined in (\ref{e})
% and (\ref{cap_A}), respectively. By following a similar procedure as in the
% previous subsection, we have the following standard least-squares problem
% formulation for identifying $R$:%
% \begin{equation}
% \Xi _{R}^{\ast }=\arg \min\limits_{\Xi _{R}}\left\Vert \mathcal{A}_{R}\Xi
% _{R}-e_{R}\right\Vert ^{2},  \label{estimating_R}
% \end{equation}%

\begin{proposition}
\label{corco_estimateR}{Given system (\ref{plant}) with $C$ being of full
column rank, $\mathcal{A}_{R}$ is of full column rank only if $rank\left(
K_{M}\right) =p$, where $K_{M}$ is defined in (\ref{nece_condition_uni}). }
\end{proposition}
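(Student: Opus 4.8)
The plan is to argue by contraposition, in the same spirit as the proof of part (i) of Proposition~\ref{ALS_fullrank}: I will show that if $\mathrm{rank}(K_M)<p$, then $\mathcal{A}_R$ cannot have full column rank, by exhibiting an explicit nonzero vector in its kernel.

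First I would simplify $\mathcal{A}_R$ using the mixed-product property of the Kronecker product. Since $\overline{K}=K\otimes K$ and $\mathcal{A}_2=(I_p\otimes I_p)+(CAM\otimes CAM)$, one gets $\overline{K}\mathcal{A}_2=(K\otimes K)+(KCAM\otimes KCAM)$ and $\overline{K}(I_p\otimes CAM)=K\otimes KCAM$, so that
\begin{equation*}
\mathcal{A}_R=\begin{bmatrix} (K\otimes K)+(KCAM\otimes KCAM)\\ -(K\otimes KCAM)\end{bmatrix}.
\end{equation*}
This form makes the dependence on the two blocks $K$ and $KCAM$ of $K_M$ transparent.

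Next, suppose $\mathrm{rank}(K_M)<p$. Because $K_M\in\mathbf{R}^{2r\times p}$, there is a nonzero $h\in\mathbf{R}^p$ with $K_M h=0$, i.e., $Kh=0$ and $KCAM\,h=0$. Take $R=hh^{\mathrm{T}}$, which is symmetric and positive semidefinite, hence an admissible covariance; then $\mathrm{vec}(R)=h\otimes h\ne 0$. Applying $\mathcal{A}_R$ to $h\otimes h$, the top block evaluates to $(Kh\otimes Kh)+(KCAM\,h\otimes KCAM\,h)=0$ and the bottom block to $-(Kh\otimes KCAM\,h)=0$. Hence $\mathcal{A}_R\,\mathrm{vec}(R)=0$ with $\mathrm{vec}(R)\ne 0$, so $\mathcal{A}_R$ is not of full column rank; this proves the claim by contraposition.

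I do not expect any genuine obstacle here: the only care needed is the routine Kronecker bookkeeping ($\mathrm{vec}(hh^{\mathrm{T}})=h\otimes h$ and the mixed-product rule $(A\otimes B)(C\otimes D)=AC\otimes BD$). It is worth noting that, exactly as in Proposition~\ref{ALS_fullrank}, keeping $Q$ at its true value and perturbing $R$ by $hh^{\mathrm{T}}$ produces a genuine identifiability ambiguity that survives the symmetry and positive-semidefiniteness constraints, so the condition $\mathrm{rank}(K_M)=p$ is indeed necessary.
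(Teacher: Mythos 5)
Your proof is correct and follows exactly the route the paper intends: the published proof is omitted with a pointer to Proposition~\ref{ALS_fullrank}, whose part (i) uses the same construction $R=hh^{\mathrm{T}}$ with $K_Mh=0$ to exhibit a kernel vector. Your explicit Kronecker simplification of $\mathcal{A}_R$ and the verification that $\mathcal{A}_R(h\otimes h)=0$ are accurate, so this is a faithful filling-in of the omitted argument.
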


\begin{proof}
The proof follows a similar procedure with that of Proposition \ref%
{ALS_fullrank}, and is omitted.
\end{proof}

For the cases (e.g., part (iii) of Corollary \ref{corco_estimateQ} or the
conditions in (\ref{case_a_onlyQ})-(\ref{case_c_onlyQ}) do not hold) when
the solutions to the systems of linear equations are not unique, a natural
idea is to use regularization to introduce further constraints to uniquely
determine the solution \cite{Chen2013}. However, a key question to be
answered is whether some desirable properties can be guaranteed for the
covariance estimates. A full investigation of the above questions is the
subject of our current and future work.

\section{\label{exten_delay} Numerical Examples}

We next use some numerical examples to illustrate the theoretical results.
Firstly, consider the plant model (\ref{plant}) with%
\begin{equation*}
\begin{array}{l}
A=\left[
\begin{array}{ll}
1 & 1 \\
0 & 1%
\end{array}%
\right] ,\text{ }B=\left[
\begin{array}{ll}
0 & 1 \\
1 & 0%
\end{array}%
\right] ,G=\left[
\begin{array}{l}
1 \\
1%
\end{array}%
\right] , \\
C=\left[
\begin{array}{ll}
1 & 0 \\
1 & 1%
\end{array}%
\right] ,\text{ }D=\left[
\begin{array}{ll}
1 & 0 \\
2 & 0%
\end{array}%
\right] .%
\end{array}%
\end{equation*}%
It can be verified%
\begin{equation*}
H=\left[
\begin{array}{llll}
-2 & 1 & 1 & 0 \\
-2 & 1 & 2 & 0%
\end{array}%
\right]
\end{equation*}%
so that the above model fits case (a) in (\ref{four_cases}). Also, from (\ref%
{r_lowerbound}), it is necessary that $r\geq 2$. Select%
\begin{equation*}
K=\left[
\begin{array}{ll}
t_{11} & t_{12} \\
t_{21} & t_{22}%
\end{array}%
\right] .
\end{equation*}%
From the decoupling condition in (\ref{necessary_con1}), we then have%
\begin{equation*}
\begin{array}{l}
\left[
\begin{array}{llll}
-2(t_{11}+t_{12}) & t_{11}+t_{12} & t_{11}+2t_{12} & 0 \\
-2(t_{21}+t_{22}) & t_{21}+t_{22} & t_{21}+2t_{22} & 0%
\end{array}%
\right] =0 \\
\Leftrightarrow t_{11}=t_{12}=t_{21}=t_{22}=0.%
\end{array}%
\end{equation*}%
Note that increasing the row dimension of $K$ still leads to the same
conclusion, i.e., $K$ is a zero matrix. In this case, we have $\mathcal{A=}$
$0$ in (\ref{LSQ}), $\mathcal{A}_{Q}=0$ in (\ref{estimating_Q}), $\mathcal{A}%
_{R}=0$ in (\ref{estimating_R}), i.e., the noise covariances $Q/R$ are
unidentifiable at all.

Secondly, consider the plant model (\ref{plant}) with%
\begin{equation*}
\begin{array}{l}
A=\left[
\begin{array}{ll}
1 & 1 \\
0 & 1%
\end{array}%
\right] ,\text{ }B=\left[
\begin{array}{l}
1 \\
0%
\end{array}%
\right] ,\text{ }G=\left[
\begin{array}{ll}
1 & 0 \\
1 & -2%
\end{array}%
\right] . \\
C=\left[
\begin{array}{cc}
1 & -2 \\
1 & 1 \\
-2 & 1%
\end{array}%
\right] ,\text{ }D=\left[
\begin{array}{l}
1 \\
2 \\
1%
\end{array}%
\right]%
\end{array}%
\end{equation*}%
It can be verified%
\begin{equation*}
H=\left[
\begin{array}{cc}
1 & 1 \\
0 & 2 \\
-1 & 1%
\end{array}%
\right]
\end{equation*}%
so that the above model fits case (a) in (\ref{four_cases}). Also, from (\ref%
{r_lowerbound}), it is necessary that $r\geq 3$. Select%
\begin{equation}
K=\left[
\begin{array}{lll}
t_{11} & t_{12} & t_{13} \\
t_{21} & t_{22} & t_{23} \\
t_{31} & t_{32} & t_{33}%
\end{array}%
\right] .  \label{K123}
\end{equation}%
From the decoupling condition in (\ref{necessary_con1}), we then have%
\begin{equation*}
\begin{array}{l}
\left[
\begin{array}{c}
\begin{array}{ll}
t_{11}-t_{13} & t_{11}+2t_{12}+t_{13} \\
t_{21}-t_{23} & t_{21}+2t_{22}+t_{23}%
\end{array}
\\
\begin{array}{ll}
t_{31}-t_{33} & t_{31}+2t_{32}+t_{33}%
\end{array}%
\end{array}%
\right] =0 \\
\Leftrightarrow \left.
\begin{array}{l}
t_{11}=t_{13},\text{ }t_{12}=-t_{11}; \\
t_{21}=t_{23},\text{ }t_{22}=-t_{21}; \\
t_{31}=t_{33},\text{ }t_{32}=-t_{31}.%
\end{array}%
\right.%
\end{array}%
\end{equation*}%
If we set%
\begin{equation*}
K=\left[
\begin{array}{lll}
1 & -1 & 1 \\
2 & -2 & 2 \\
3 & -3 & 3%
\end{array}%
\right] ,
\end{equation*}%
it can be verified that neither of the two necessary conditions in (\ref%
{nece_condition_uni}) is satisfied. In particular, $rank(K_{M})=2,$ $%
rank\left( KCG\right) =1$. Hence, it is not possible for $\mathcal{A}\in
\mathbf{R}^{18\times 13}$ in (\ref{LSQ}) to have full column rank. In fact,
it can be checked that $rank\left( \mathcal{A}\right) =2$, i.e., $Q$ and $R$
are not uniquely jointly identifiable. Moreover, it can be calculated that
for $\mathcal{A}_{Q}\in \mathbf{R}^{9\times 4}$ in (\ref{estimating_Q}), we
have $rank\left( \mathcal{A}_{Q}\right) =1$. This reinforces the results of
Corollary \ref{only_Q} since one can easily see that $rank(H)+2=4\neq
rank\left( \left[
\begin{array}{ll}
H & CG%
\end{array}%
\right] \right) =3$, i.e., the condition in (\ref{case_a_onlyQ}) does not
hold. In other words, assuming $R$ to be known, $Q$ is not uniquely
identifiable. Similarly, for $\mathcal{A}_{R}\in \mathbf{R}^{18\times 9}$ in
(\ref{estimating_R}), since $rank(K_{M})=2$, from Corollary \ref%
{corco_estimateR}, one has that $\mathcal{A}_{R}$ cannot have full column
rank. To double confirm, it can be checked that $rank\left( \mathcal{A}%
_{R}\right) =2$, i.e., $R$ is not uniquely identifiable, when $Q$ is assumed
to be known. Note that increasing the row dimension of $K$ still leads to
the same conclusions as above for this example.

Thirdly, consider the plant model (\ref{plant}) with%
\begin{equation*}
\begin{array}{l}
A=\left[
\begin{array}{ll}
1 & 1 \\
0 & 1%
\end{array}%
\right] ,\text{ }B=\left[
\begin{array}{l}
1 \\
0%
\end{array}%
\right] ,\text{ }G=\left[
\begin{array}{l}
1 \\
4%
\end{array}%
\right] , \\
C=\left[
\begin{array}{cc}
1 & -2 \\
1 & 1 \\
-2 & 1%
\end{array}%
\right] ,\text{ }D=0.%
\end{array}%
\end{equation*}%
It can be verified%
\begin{equation*}
H=\left[
\begin{array}{cc}
1 & 0 \\
1 & 0 \\
-2 & 0%
\end{array}%
\right]
\end{equation*}%
so that the above model fits case (b) in (\ref{four_cases}). From (\ref{r_lowerbound}), it is necessary that $r\geq 3$. Denote $K$ as in (\ref{K123}%
). From the decoupling condition in (\ref{necessary_con1}), we then have%
\begin{equation*}
\left[
\begin{array}{c}
\begin{array}{ll}
t_{11}+t_{12}-2t_{13} & 0 \\
t_{21}+t_{22}-2t_{23} & 0%
\end{array}
\\
\begin{array}{ll}
t_{31}+t_{32}-2t_{33} & 0%
\end{array}%
\end{array}%
\right] =0.
\end{equation*}%
Select%
\begin{equation*}
K=\left[
\begin{array}{lll}
1 & -1 & 0 \\
1 & 3 & 2 \\
2 & 4 & 3%
\end{array}%
\right] .
\end{equation*}%
One then has that $rank(K_{M})=2,$ $rank\left( KCG\right) =1$, i.e., it is
not possible for $\mathcal{A}\in \mathbf{R}^{18\times 10}$ in (\ref{LSQ}) to
have full column rank. In fact, it can be checked that $rank\left( \mathcal{A%
}\right) =5$, i.e., $Q$ and $R$ are not uniquely jointly identifiable. Also,
it can be obtained that for $\mathcal{A}_{Q}\in \mathbf{R}^{9\times 1}$ in (%
\ref{estimating_Q}), we have $rank\left( \mathcal{A}_{Q}\right) =1$. This
reinforces the results of Corollary \ref{only_Q} since it can be confirmed
that $rank\left( B\right) +1=2=rank\left( \left[
\begin{array}{ll}
B & G%
\end{array}%
\right] \right) $, i.e., the condition in (\ref{case_b_onlyQ}) holds. In
other words, assuming $R$ to be known, $Q$ is uniquely identifiable.
Similarly, for $\mathcal{A}_{R}\in \mathbf{R}^{18\times 9}$ in (\ref%
{estimating_R}), since $rank(K_{M})=2$, from Corollary \ref{corco_estimateR}%
, one can conclude that $\mathcal{A}_{R}$ cannot have full column rank. To
double confirm, it can be checked that $rank\left( \mathcal{A}_{R}\right) =4$%
, i.e., $R$ is not uniquely identifiable, when $Q$ is assumed to be known.
Note that increasing the row dimension of $K$ still leads to the same
conclusions as above for this example.

Next for the third example, assume that the true covariances are $Q=1,$ $%
R=0.1I_{3}$. With the above system information, we follow the
procedure in Section III. C, and estimate $Q$, assuming $R$ to be
known. We run the simulation for 500 scenarios. For each scenario,
we use in total 1000 data points to estimate $S_{0}$ as in
(\ref{appro}). and the estimate for $Q$ is obtained by solving the
optimization problem (\ref{ls_onlyQ}). The results are shown in
Figure \ref{Q_es} for the above-mentioned 500 different scenarios.
It can be seen from Figure \ref{Q_es} that the estimates for $Q$ are
well dispersed around its true value. We finally remark that for
solving (\ref{ls_onlyQ}), an additional positive semidefinite
constraint has been enforced on the $Q$ estimates (i.e., for this
example, $\widehat{Q}$ is a nonnegative scalar). The optimization
problem is transformed to a standard semidefinite program and solved
by cvx \cite{Boyd2013}.
\vspace{0.5cm}
\begin{figure}[htbp]
\centering
\includegraphics[width=0.335\textwidth,bb=108 221 485
505]{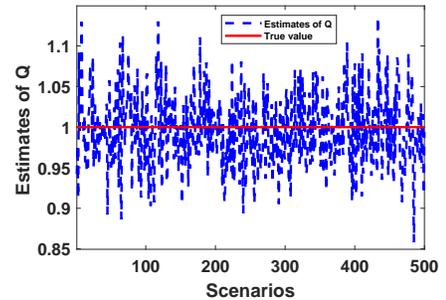} \caption{Estimates for $Q$ under 500 different
scenarios} \label{Q_es}
\end{figure}

\section{\label{conclusion}Conclusions}

The past few decades have witnessed much progress in optimal filtering for
systems with arbitrary unknown inputs and stochastic noises. However, the
existing works assume perfect knowledge of the noise covariances in the
filter design, which is impractical. In this paper, for stochastic systems
under unknown inputs, we have investigated the identifiability question of
the process and measurement noises covariances (i.e., $Q$ and $R$) using the
correlation-based measurement difference approach.

More specifically, we have focused on the single-step measurement case, and
established (i) necessary conditions under which $Q$ and $R$ can be uniquely
jointly identified (see Proposition \ref{ALS_fullrank}); (ii) necessary and
sufficient conditions under which $Q$ can be uniquely identified, when $R$
is known (see Proposition \ref{only_Q} and Corollary \ref{corco_estimateQ});
(iii) necessary conditions under which $R$ can be uniquely identified, when $%
Q$ is known (see Proposition \ref{corco_estimateR}). Moreover, it has been
shown that for achieving the results mentioned above, the measurement
difference approach requires some decoupling conditions for constructing a
stationary time series (see Proposition \ref{case_abc}). The latter
conditions are proved to be sufficient (see Theorem \ref{decouple_sufficent}%
) for the strong detectability requirement in \cite{Hautus1983}.

The above findings reveal that only under restrictive conditions, $Q$/$R$
can be potentially uniquely identified. This not only helps us to have a
better understanding of the applicability of existing filtering frameworks
under unknown inputs (since almost all of them require perfect knowledge of
the noise covariances) but also calls for further investigation of
alternative and more viable noise covariance methods under unknown inputs.

\section{Acknowledgment}

The authors would like to thank the reviewers and Editors for their
constructive suggestions which have helped to improve the quality and
presentation of this paper significantly. He Kong's work was supported by
the Science, Technology, and Innovation Commission of Shenzhen Municipality
[Grant No. ZDSYS20200811143601004]. Tianshi Chen's work was supported by the
Thousand Youth Talents Plan funded by the central government of China, the
Shenzhen Science and Technology Innovation Council under contract No.
Ji-20170189, the President's grant under contract No. PF. 01.000249 and the
Start-up grant under contract No. 2014.0003.23 funded by the Chinese
University of Hong Kong, Shenzhen.

\end{document}